\numberwithin{equation}{section}
\def\l{\lambda}
\def\L{\Lambda}
\def\N{\mathbb N}
\def\Z{\mathbb Z}
\def\R{\mathbb R}
\def\e{\varepsilon}
\def\a{\alpha}
\def\s{\sigma}
\def\l{\lambda}
\def\p{\Phi}
\def\g{\gamma}
\def\im{\mathbf{i}}
\def\CC{\mathcal{C}}
\newtheorem{teor}{Theorem}[section]
\newtheorem{theorem}[teor]{Theorem}
\newtheorem{definition}[teor]{Definition}
\newtheorem{proposition}[teor]{Proposition}
\newtheorem{lemma}[teor]{Lemma}
\newcommand{\norm}[1]{\Vert#1\Vert }
\begin{document}

\title{Compactness of Fourier Integral Operators on weighted modulation spaces}
\author{Carmen Fern\'andez, Antonio Galbis, Eva Primo}
\maketitle

\begin{abstract}
 Using the matrix representation of Fourier integral operators with respect to a Gabor
frame, we study their compactness on weighted modulation spaces. As a consequence, we
recover and improve some compactness results for pseudodifferential operators.
\end{abstract}

\section{Introduction}

The aim of this paper is to investigate compactness for {\it Fourier integral operators} (FIOs) when acting on weighted modulation spaces. The boundedness and Schatten class properties of FIOs have been studied by several authors under various assumptions on the phase and the symbol. See for instance \cite{Bishop,Boulkhemair_1997,Cordero_2009_Boundedness,Cordero_2010_Time,Ruzhansky_2006_Global,Toft_Concetti_2007,Toft_Concetti_2009,Toft_Concetti_2010}. However no characterization seems to be known of those FIOs which are compact. Our approach to the study of the compactness of the FIOs follows the point of view of \cite{Cordero_2010_Time}, which means that our results strongly depend on the matrix representation of a FIO with respect to a Gabor frame.

For a function $f$ on $\R^d$ the FIO $T$ with symbol $\s \in L^{\infty}(\R^{2d})$ and phase $\p$ on $\R^{2d}$ can be formally defined by
$$
Tf(x)= \int_{\R^d}e^{2\pi \im \p(x,\eta)}\s(x,\eta)\hat{f}(\eta)d \eta.
$$

The phase $\p(x,\eta)$ is {\it tame}, which means that it is smooth on ${\mathbb R}^{2d}$ and fulfills the estimates
 \begin{equation}\label{phase-1} |\partial^{\a}_z \p(z)| \leq C_{\a},  \ \ \ |\a|\geq 2, z\in {\mathbb R}^{2d},\end{equation} and the nondegeneracy condition
\begin{equation}\label{phase-2}
 |\det \,  \partial^2_{x,\eta}\, \p(x,\eta)| \geq \delta > 0,\ \ (x,\eta)\in {\mathbb R}^{2d}.
\end{equation}

\par\medskip
The symbol $\sigma$ on ${\mathbb R}^{2d}$ satisfies
\begin{equation}\label{simbolo}
|\partial^{\a}_{z}\s(z)|\leq C_{\a}, \ \ \text{a.e.}\ z\in \R^{2d}, \ |\a|\leq 2N
\end{equation} for a fixed $N\in \N .$
Here $\partial^{\a}_{z}$ denotes the distributional derivative. When $\Phi(x,\eta) = x\eta$ we recover the pseudodifferential operators (PSDOs) in the Kohn-Nirenberg form.
\par\medskip
Let $T_x$ and $M_\omega$ be the translation and modulation operators
$$
\left(T_x f\right)(t) = f(x-t),\ \ \left(M_\omega f\right)(t) = e^{2\pi i \omega t}f(t).
$$ We fix $\alpha > 0, \beta > 0$ and consider the regular lattice $\Lambda = \alpha{\mathbb Z}^d \times \beta{\mathbb Z}^d$. Then, for $\lambda = \left(\alpha n, \beta m\right)\in \Lambda,$ the time-frequency shift $\pi(\lambda)$ is defined by $\pi(\lambda) f = M_{\alpha n}T_{\beta m} f.$ The set of time-frequency shifts ${\mathcal G}(g, \Lambda) = \left\{\pi(\lambda)g:\ \lambda\in \Lambda\right\}$ for a non-zero $g\in L^2({\mathbb R}^d)$ is called a Gabor system.

The key result in \cite{Cordero_2010_Time} shows that the matrix representation of a FIO with respect to a Gabor frame ${\mathcal G}(g, \Lambda)$ with $g\in {\mathcal S}({\mathbb R}^d)$ is well organized. In fact, for a tame phase function $\Phi$ and a symbol $\sigma$ satisfying condition (\ref{simbolo}) there exists a constant $C_N > 0$ such that
\begin{equation}\label{eq:matrix-decay}
\left|\langle T\pi(\l)g, \pi(\mu)g \rangle \right| \leq C_N \langle \chi(\l)-\mu \rangle^{-2N}
\end{equation} for every $\lambda,\mu\in \Lambda$ where $\chi$ is the {\it canonical transformation} of the phase $\Phi.$ We recall that $(x,\xi) = \chi(y,\eta)$ is a bilipschitz map $\chi:\R^d\to\R^d$ defined through the system
$$
\left\{ \begin{matrix}
y= \nabla_\eta \p(x,\eta)\\
\xi = \nabla_x \p(x,\eta)
\end{matrix}.\right.
$$

The estimate (\ref{eq:matrix-decay}) is an extension of previous results of Gr\"ochenig \cite{Grochenig_2006_Time} concerning almost diagonalization of PSDOs. See also \cite{Grochenig_2008_Banach}. The condition (\ref{simbolo}) on the symbol can be relaxed. In fact, if ${\mathcal G}(g, \Lambda)$ is a Parseval frame then the estimate (\ref{eq:matrix-decay}) also holds under the weaker assumption that $\s$ belongs to an appropriate modulation space (see \cite{Cordero_2012_approximation}).

We will use the decay estimate (\ref{eq:matrix-decay}) to discuss the compactness of the FIOs when acting on weighted modulation spaces.  More precisely, we prove that the FIO is compact when acting on some modulation space of the form $M^p_m({\mathbb R}^d)$ if and only if the sequences $$\left(\langle T\pi(\lambda)g, \pi(\chi'(\lambda)+\mu)g\rangle\right)_{\lambda\in\Lambda}$$ converge to zero for all $\mu \in \Lambda,$ where $\chi'$ denotes a discrete version of the canonical transformation $\chi.$ This is the content of Theorem \ref{Teo_FIO_comp}. In particular, it follows that compactness does not depend neither on $p$ nor on $m.$ To achieve our goal we need to focus our attention on a class of matrices $A = \left(a_{\gamma,\gamma'}\right)_{\gamma,\gamma'\in \Lambda}$ with the property that the decay of the coefficient $a_{\gamma,\gamma'}$ is determined by the distance of $(\gamma, \gamma')$ to the graph of $\gamma = \chi(\gamma').$ We characterize when such a matrix defines a compact operator when
acting on weighted $\ell^p$ spaces of sequences. For a quadratic phase $\Phi$ we completely characterize  in Theorem \ref{quadratic} the symbols $\sigma$ satisfying condition (\ref{simbolo}) for which the corresponding FIO is compact. The operators  we are considering may fail to be bounded on mixed modulation spaces as was shown in \cite{Cordero_2010_Time}. To overcome this obstacle, an extra condition on the phase was introduced in \cite{Cordero_2010_Time}. Under this additional condition, the obtained results are extended to weighted mixed modulation spaces. As a consequence, we recover and improve some compactness results for PSDOs obtained in \cite{Fernandez_Galbis_2006,Fernandez_2007_Some,Fernandez_2010_Annihilating}.

\section{Preliminaries}

\subsection{Modulation spaces} The short time Fourier transform (STFT) of a function $f\in L^2({\mathbb R}^d)$ with respect to a non-zero window $g\in L^2({\mathbb R}^d)$ is
$$
V_gf(x, \omega) = \displaystyle \int_{{\mathbb R}^d}f(t)\overline{g(t-x)} e^{-2i \pi \omega t}dt.$$ Clearly, we may also write $V_{g}f(x,\omega)=\left<f,M_{\omega}T_x g\right>, $ where $M_{\omega}$ and $T_{x}$ are the modulation and translation operators. Hence $V_gf$ can also be defined for $f \in {\mathcal S}'({\mathbb R}^d)$ and $g \in {\mathcal S}({\mathbb R}^{d}).$ Modulation space norms are measures of the time-frequency concentration of a function or distribution. In order to quantify the decay properties of the STFT of a distribution $f\in {\mathcal S}'({\mathbb R}^d)$ we will use weight functions. A function $v:\R^N\to (0,\infty)$ is said to be a submultiplicative weight if it is continuous, symmetric on each coordinate and $$v(r+k)\leq v(r)v(k).$$ The polynomial weights are the submultiplicative weights of the form $$
v_s(r)= \langle r\rangle^s = (1+ |r|^2)^{\frac{s}{2}},\ s > 0.$$ A map $m: \R^N \to (0,\infty)$ is said to be $v$-moderate, with constant $C_m$, when $m(r+k)\leq C_m m(r)v(k)$ for every $r, k \in \R^N.$ If $m$ is $v$-moderate, then $1/m$ is also $v$-moderate. Given a non-zero window $g \in {\mathcal S}({\mathbb R}^{d})$, a $v_s$-moderate weight ($s > 0$) $m$ and $1\leq p,q\leq \infty,$ the {\it modulation space} $M^{p,q}_m({\mathbb R}^d)$ consists of all tempered distributions $f\in {\mathcal S}'({\mathbb R}^{d})$ such that
$$
\norm{f}_{M^{p,q}_m} = \norm{V_g f}_{L^{p,q}_m} = \left(\int_{\R^d}\left(\int_{\R^d}\left|V_g(x,\omega)\right|^p m(x,\omega)^p dx\right)^{\frac{q}{p}} d\omega\right)^{\frac{1}{q}}<\infty,
$$ with obvious changes when $p = \infty$ or $q = \infty.$ If $p = q$ we write $M^p_m({\mathbb R}^d)$ instead of $M^{p,p}_m({\mathbb R}^d).$ Then $M^{p,q}_m({\mathbb R}^d)$ is a Banach space whose definition is independent of the window $g.$ For $1\leq p,q < \infty,$ ${\mathcal S}({\mathbb R}^{d})$ is dense in $M^{p,q}_m(\R^d).$

The closure of ${\mathcal S}({\mathbb R}^{d})$ in $M^{\infty,q}_m(\R^d)$ is denoted $M^{0,q}_m(\R^d)$ and $M^{p,0}_m(\R^d)$ is defined similarly. In particular, the closure of ${\mathcal S}({\mathbb R}^{d})$ in $M^\infty(\R^d)$ is denoted by $M^0(\R^d)$ and consists of those tempered distributions whose STFT vanishes at infinity.

For $p, \, q \in [1,\infty)\cup \{0\}$ the dual of $M^{p,q}_m(\R^d)$ can be identified with $M^{p',q'}_{1/m}(\R^d),$ $p'$ and $q'$ being the conjugate exponents of $p$ and $q.$ As usual, we agree that the conjugate exponent of $0$ is $1.$ We refer to \cite{Grochenig_2001_Foundations} for background on modulation spaces.

\subsection{Sequence spaces}
Given  $I$ and $J$  countable sets of indices, a sequence of positive numbers $m = \left(m_{i,j}\right)_{(i,j) \in I\times J}$ and $1 \leq p, q < \infty, $ we consider the sequence space $\ell^{p,q}_m(I\times J)$ consisting of those sequences $x = (x_{i,j})_{(i,j)\in I\times J}$ such that
$$
\|x\|_{\ell^{p,q}_m} := \left(\sum_{j\in J}\left(\sum_{i\in
I}\left|x_{i,j}m_{i,j}\right|^p\right)^{\frac{q}{p}}\right)^{\frac{1}{q}} < \infty.
$$

In the case that $p = \infty$ or $q = \infty,$ the previous norm is modified in the obvious way. If $p = q$ we have the weighted $\ell^p$-spaces.
\par\medskip

We denote by $\ell^{0,q}_m$ the closed subspace of $\ell^{\infty,q}_m$ consisting of those sequences $x\in \ell^{\infty,q}_m$ such that $\lim_{i\in
I}\left|x_{i,j}m_{i,j}\right| = 0$ for every $j\in J.$ $\ell^{p,0}_m$ is defined analogously. It turns out that $\ell^{0,q}_m(I\times J)$ (resp. $\ell^{p,0}_m(I\times J)$) coincides with the closure in $\ell^{\infty,q}_m(I\times J)$ (resp. $\ell^{p,\infty}_m(I\times J)$) of the set of those sequences with finitely many non zero coordinates, denoted ${\mathbb C}^{(I\times J)}.$

Finally, $\ell^{0,0}_m(I\times J)$ coincides with the Banach space $c_{0,m}(I\times J)$ of all sequences $x = (x_{i,j})$ whose product with $m$ converges to $0.$
\par\medskip

 $\ell^{p,q}_m(I\times J)$ ($p, \, q \in [1,\infty]\cup \{0\}$) is a Banach space. For $p, \, q \in [1,\infty)\cup \{0\}$ the dual of $\ell^{p,q}_m(I\times J)$ can be identified with $\ell^{p',q'}_{1/m}(I\times J),$ $p'$ and $q'$ being the conjugate exponents of $p$ and $q.$ As usual, we agree that the conjugate exponent of $0$ is $1.$ The duality is given by
$$
\ell^{p,q}_m(I\times J) \times \ell^{p',q'}_{1/m}(I\times J)\to {\mathbb C},\ (x,y)\mapsto \sum_{i,j}x_{i,j} y_{i,j}.
$$
 \par\medskip

 Given a sequence $a=(a_{i,j})_{(i,j)\in I\times J}$ of complex numbers, we denote by $D_a$ {\it the diagonal operator}
 $$
D_a:{\mathbb  C}^{I\times J}\to {\mathbb  C}^{I\times J}, \,
x=(x_{i,j})_{(i,j)\in I\times J}\mapsto (a_{i,j}x_{i,j})_{(i,j)\in I\times J}.$$ It is well-known that $D_a$ is a bounded operator on $\ell^{p,q}_m(I\times J)$ if  $a\in \ell^\infty(I\times J),$ and moreover $\norm{D_a} = \norm{a}_\infty$ for all $p, \, q \in [1,\infty]\cup \{0\}$ and every $m.$ As a consequence, since each $a\in c_0(I\times J)$ is the $\|\,  \|_\infty$-limit of its finite sections, the diagonal operator $D_a$ is compact on  $\ell^{p,q}_m(I\times J)$ when  $a\in c_0(I\times J).$
\par\medskip

\par\medskip
A lattice on $\R^N$ is a set of the form $\Lambda = A{\mathbb Z}^N,$ where $A$ is an invertible $N\times N$ matrix. Given a submultiplicative weight $v,$  a sequence of positive numbers
 $m=(m_\gamma)_{\gamma \in \Lambda}$ is $v$-moderate, with constant $C_m$, if $m_{\gamma+\gamma'}\leq C_m m_\gamma v(\gamma')$ for all $\gamma,\gamma' \in \Lambda.$ If $m$ is $v$-moderate, then $1/m$ is also $v$-moderate.
For a lattice $\Lambda$  in ${\mathbb R}^{N},$ the translation operator $T_\gamma:{\mathbb C}^{\Lambda}\to {\mathbb C}^{\Lambda}$ is defined by
$$
T_\gamma \left(x_{\lambda}\right)_{\lambda\in \Lambda} = \left(x_{\lambda-\gamma}\right)_{\lambda\in \Lambda}.
$$

\par\medskip

If  $I$ and $J$ are  lattices in ${\mathbb R}^d$  and  ${\mathbb R}^\ell$ respectively,  we write  $\Lambda: = I\times J,$ which is a lattice in ${\mathbb R}^{N}$ ($N = d+\ell$). Given  $m=(m_\gamma)_{i\in \Lambda},$  $v$-moderate with constant $C_m,$  the translation operator  $T_\gamma$ is bounded on $\ell^{p,q}_m(\Lambda)$ for every $\gamma\in \Lambda,$ and $\norm{T_\gamma} \leq C_m v(\gamma).$

\subsection{Gabor frames} We fix a function $g\in L^2({\mathbb R}^d)$ and a lattice $\Lambda = \alpha{\mathbb Z}^d \times \beta{\mathbb Z}^d,$ for $\alpha, \beta > 0.$ The Gabor system ${\mathcal G}(g, \Lambda) = \left\{\pi(\lambda)g:\ \lambda\in \Lambda\right\}$ is said to be a {\it Gabor frame} if there exist constants $A, B > 0$ such that
$$
A\norm{f}_2^2 \leq \sum_{\lambda\in\Lambda}\left|\left<f, \pi(\lambda)g\right>\right|^2 \leq B\norm{f}_2^2\ \ \ \forall f\in L^2({\mathbb R}^d).
$$ If $A = B = 1,$ then the Gabor frame is said to be a {\it Parseval frame}. Associated to the Gabor frame ${\mathcal G}(g, \Lambda)$ we consider the analysis operator
$$
C_g:L^2({\mathbb R}^d)\to \ell^2(\Lambda),\ \ f\mapsto \left(\left<f, \pi(\lambda)g\right>\right)_{\lambda\in \Lambda},
$$ and its adjoint $D_g = C_g^\ast,$ which is the synthesis operator
$$
D_g:\ell^2(\Lambda)\to L^2({\mathbb R}^d),\ \ \left(c_\lambda\right)_{\lambda\in \Lambda}\mapsto \sum_{\lambda\in \Lambda}c_\lambda \pi(\lambda)g.
$$ Then $S_g = D_g\circ C_g$ is a bounded and invertible operator on $L^2({\mathbb R}^d)$ called frame operator. The canonical dual window of $g$ is defined as $h = S_g^{-1}g.$ It turns out that ${\mathcal G}(h, \Lambda)$ is also a Gabor frame and
$$
D_g\circ C_h = D_h\circ C_g = Id_{L^2({\mathbb R}^d)}.
$$ If the Gabor frame is a Parseval frame then $S_g = Id_{L^2({\mathbb R}^d)}$ and $h = g.$
\par\medskip
In the case that ${\mathcal G}(g, \Lambda)$ is a Gabor frame and $g\in {\mathcal S}({\mathbb R}^d)$ then, as proved by Janssen (see \cite{Janssen_1995_duality} or \cite[13.5.4]{Grochenig_2001_Foundations}), also $h = S_g^{-1}(g)\in {\mathcal S}({\mathbb R}^d).$ Gr\"ochenig and Leinert \cite[4.5]{Grochenig_2004_Wiener} showed the existence of Parseval frames ${\mathcal G}(g, \Lambda)$ with $g\in {\mathcal S}({\mathbb R}^d).$ Moreover, for every  polynomially moderate weight $m$ and for every $1\leq p,q\leq \infty,$
$$C_g:M_m^{p,q}({\mathbb R}^d)\to \ell^{p,q}_{m}(\Lambda)\ \mbox{and}\ D_g:\ell^{p,q}_{m}(\Lambda)\to M_m^{p,q}({\mathbb R}^d)$$ are bounded operators, weak$^\ast$ continuous, and $D_g\circ C_h = D_h\circ C_g = Id_{M_m^{p,q}({\mathbb R}^d)}.$ Here $D_g$ is the transposed map of $C_g:M_{1/m}^{p',q'}({\mathbb R}^d)\to \ell^{p',q'}_{1/m}(\Lambda).$ For $p = 1$ or $q = 1$ we take $p' = 0$ or $q' = 0$ respectively.
\par\medskip
If $c = \left(c_\lambda\right)_{\lambda\in \Lambda}$ and $1\leq p,q < \infty$ then $D_g(c) = \sum_{\lambda\in \Lambda}c_\lambda \pi(\lambda)g.$ In the limit cases $p = \infty$ or $q = \infty$ the series in the right hand side converges to $D_g(c)$ in the weak$^\ast$ topology. See for instance \cite{Feichtinger_Grochenig_1997} or \cite[12.2.3,12.2.4]{Grochenig_2001_Foundations}.

\subsection{Matrix representation of operators}\label{sec:matrix_representation} Cordero, Nicola and Rodino \cite{Cordero_2010_Time} obtained a result on almost diagonalization for FIOs with respect to a Gabor frame which permitted to study boundedness of Fourier Integral Operators (FIOs) on weighted modulation spaces. Our aim is to use the almost diagonalization technique to study the compactness of FIOs. To this end we need to establish a clear relationship between operators acting on modulation spaces and operators acting on appropriate sequence spaces.
\par\medskip
From now on we assume that ${\mathcal G}(g, \Lambda)$ is a Gabor frame and $g\in {\mathcal S}({\mathbb R}^d).$ Then $h = S_g^{-1}(g)\in {\mathcal S}({\mathbb R}^d)$ and $D_g\circ C_h = D_h\circ C_g = Id_{M_m^{p,q}({\mathbb R}^d)}$ for all $p,q\in [1,\infty]$ and for every $v$-moderate weight $m.$ The (topological) identities ${\mathcal S}'({\mathbb R}^d) = \bigcup \{M^2_{1/v_s}: s>0\}$ and ${\mathcal S}({\mathbb R}^d) = \bigcap \{M^2_{v_s}: s>0\}$ permit to conclude that
$$
C_g, C_h:{\mathcal S}({\mathbb R}^d)\to s(\Lambda)
$$ and
$$
C_g, C_h:{\mathcal S}'({\mathbb R}^d)\to s'(\Lambda)
$$ are topological isomorphisms into their ranges, where $s(\Lambda)$ is the space  of rapidly decreasing  sequences and $s'(\Lambda),$ its dual space, is endowed with the inductive topology. Moreover, every $f\in {\mathcal S}({\mathbb R}^d)$ admits a decomposition
$$
f = \sum_{\lambda \in \Lambda}\langle f, \pi(\lambda)h\rangle \pi(\lambda)g,$$ where the series converges in ${\mathcal S}({\mathbb R}^d).$
\par\medskip
\begin{definition}{\rm
The {\it Gabor matrix} associated to a continuous and linear operator $T:{\mathcal S}({\mathbb R}^d) \to {\mathcal S}'({\mathbb R}^d)$ is defined as
$$
M(T) = \left(\langle T(\pi(\lambda)g), \pi(\mu)g\rangle\right)_{(\mu, \lambda)\in \Lambda \times \Lambda}.$$ If $T$ is a FIO with symbol $\sigma$ and phase $\Phi$ we write $M(\sigma,\Phi)$ instead of $M(T).$
}
\end{definition}

\begin{theorem}\label{th:operator_versus_matrix}{\rm Let $T:{\mathcal S}({\mathbb R}^d) \to {\mathcal S}'({\mathbb R}^d)$ be a continuous and linear operator and ${\mathcal G}(g, \Lambda)$ a Gabor frame with $g\in {\mathcal S}({\mathbb R}^d).$ Then
\begin{itemize}
 \item[(1)] For $1\leq p,q < \infty,$ $T$ can be (uniquely) extended as a bounded operator from $M^{p,q}_{m_1}({\mathbb R}^d)$ into $M^{p,q}_{m_2}({\mathbb R}^d)$ if and only if $M(T)$ defines a bounded operator from $\ell^{p,q}_{m_1}(\Lambda)$ into $\ell^{p,q}_{m_2}(\Lambda)$.
 \item[(2)] For $1\leq p,q \leq \infty,$ $T$ can be extended as a weak$^\ast$ continuous operator from $M^{p,q}_{m_1}({\mathbb R}^d)$ into $M^{p,q}_{m_2}({\mathbb R}^d)$ if and only if $M(T)$ defines a weak$^\ast$ continuous operator from $\ell^{p,q}_{m_1}(\Lambda)$ into $\ell^{p,q}_{m_2}(\Lambda)$.
 \item[(3)] Let $1\leq p,q\leq \infty$ and assume that $T:M^{p,q}_{m_1}({\mathbb R}^d) \to   M^{p,q}_{m_2}({\mathbb R}^d)$ is weak$^\ast$ continuous. Then $T:M^{p,q}_{m_1}({\mathbb R}^d) \to   M^{p,q}_{m_2}({\mathbb R}^d)$ is compact if and only if $M(T):\ell^{p,q}_{m_1}(\Lambda)\to \ell^{p,q}_{m_2}(\Lambda)$ is.
\end{itemize}
 }
\end{theorem}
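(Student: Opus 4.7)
The plan is to reduce all three statements to two factorization identities,
\[
T = D_h\circ M(T)\circ C_h \qquad \text{and} \qquad M(T) = C_g\circ T\circ D_g,
\]
where $M(T)$ is viewed as the sequence-space operator with entries $\langle T\pi(\lambda)g,\pi(\mu)g\rangle$. I would first verify these identities on the dense subspaces $\mathcal{S}(\mathbb{R}^d)$ and $\mathbb{C}^{(\Lambda)}$. For $f\in \mathcal{S}(\mathbb{R}^d)$ the reconstruction $f=\sum_\lambda \langle f,\pi(\lambda)h\rangle\pi(\lambda)g$ converges in $\mathcal{S}(\mathbb{R}^d)$, so the continuity of $T:\mathcal{S}(\mathbb{R}^d)\to \mathcal{S}'(\mathbb{R}^d)$ gives
\[
\langle Tf,\pi(\mu)g\rangle = \sum_\lambda \langle T\pi(\lambda)g,\pi(\mu)g\rangle\langle f,\pi(\lambda)h\rangle = (M(T)\, C_hf)_\mu,
\]
i.e. $C_g\circ T=M(T)\circ C_h$ on $\mathcal{S}(\mathbb{R}^d)$; composing with $D_h$ and using $D_hC_g=\mathrm{Id}$ yields the first identity. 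The second follows by evaluating $C_gTD_g$ on the standard basis vector $\delta_\lambda$ and recalling $D_g\delta_\lambda=\pi(\lambda)g$.

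Part (1) is then immediate from the boundedness of $C_g,C_h:M^{p,q}_m(\mathbb{R}^d)\to \ell^{p,q}_m(\Lambda)$ and $D_g,D_h:\ell^{p,q}_m(\Lambda)\to M^{p,q}_m(\mathbb{R}^d)$ recorded in the preliminaries: if $T$ admits a bounded extension $\widetilde T:M^{p,q}_{m_1}\to M^{p,q}_{m_2}$, then $M(T)=C_g\widetilde TD_g$ is a composition of bounded operators between the corresponding sequence spaces; conversely, $D_h\circ M(T)\circ C_h$ is bounded $M^{p,q}_{m_1}\to M^{p,q}_{m_2}$ and agrees with $T$ on $\mathcal{S}(\mathbb{R}^d)$, which is dense in the source modulation space because $p,q<\infty$. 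Uniqueness of the extension is likewise immediate from density. Part (2) is handled in the same way, with norm density replaced by weak$^\ast$ density of $\mathcal{S}(\mathbb{R}^d)$ in $M^{p,q}_m(\mathbb{R}^d)$ and of $\mathbb{C}^{(\Lambda)}$ in $\ell^{p,q}_m(\Lambda)$, and with the weak$^\ast$ continuity of the four frame operators borrowed from the preliminaries.

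For part (3) the same factorizations give the equivalence at once, since the composition of a compact operator with bounded operators is compact: $T$ compact forces $M(T)=C_gTD_g$ to be compact, and conversely $M(T)$ compact forces $T=D_hM(T)C_h$ to be compact. The main obstacle I expect is the careful bookkeeping in the limit cases $p=\infty$ or $q=\infty$ in (2) and (3), where everything must be argued in the weak$^\ast$ topology: one needs uniqueness of weak$^\ast$ continuous extensions (from the weak$^\ast$ density of $\mathcal{S}(\mathbb{R}^d)$) and the fact that $D_h$ is weak$^\ast$-to-weak$^\ast$ continuous, so that $D_h\circ M(T)\circ C_h$ inherits weak$^\ast$ continuity from $M(T)$. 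With these ingredients in place all three parts are obtained by a single uniform argument.
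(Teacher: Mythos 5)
Your proposal is correct and follows essentially the same route as the paper: both rest on the identities $C_g\circ T=M(T)\circ C_h$ on $\mathcal S(\mathbb R^d)$, hence $T=D_h\circ M(T)\circ C_h$ and $M(T)=C_g\circ T\circ D_g$ on the dense subspaces, and then transfer boundedness, weak$^\ast$ continuity and compactness through these factorizations using the mapping properties of $C_g,C_h,D_g,D_h$. The only presentational difference is that the paper first records explicitly that $M(T)$ acts continuously from $s(\Lambda)$ into $s'(\Lambda)$ before invoking density, a step you leave implicit but which your argument effectively contains.
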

\begin{proof}
Let $h$ be the canonical dual window of $g.$ Then we have
$$
C_g\circ T = M(T) \circ C_h\ \ \mbox{on}\ \ {\mathcal S}(\R^d).$$ Clearly, $M(T)$ defines a continuous operator from the range $C_h({\mathcal S}({\mathbb R}^d))$, which is a closed subspace of $s(\Lambda),$ into $s'(\Lambda).$ We now check that $M(T)$ defines a continuous operator from ${\mathbb C}^{(\Lambda)}$ into $s'(\Lambda),$ when ${\mathbb C}^{(\Lambda)}$ is endowed with the topology inherited by $s(\Lambda).$ To this end, we fix $x\in {\mathbb C}^{(\Lambda)}$ and observe that $D_g(x)\in {\mathcal S}({\mathbb R}^d),$ hence $M(T)\circ C_h \circ D_g(x) = C_g\circ T \circ D_g(x).$ That is,
$$
(M(T)(C_h \circ D_g)(x))_\mu = \langle T\left(D_g(x)\right), \pi(\mu)g\rangle = \sum_{\lambda \in \Lambda}\langle T\left(\pi(\lambda)g\right), \pi(\mu)g \rangle\cdot x_\lambda.$$ Consequently, for every finite sequence $x$ we have
$$
M(T)(x) = M(T)(C_h \circ D_g)(x).$$ Therefore, $M(T)$ is continuous on ${\mathbb C}^{(\Lambda)}$ when this space is considered as a subspace of $s(\Lambda).$ By density, $M(T)$ defines a continuous operator from the space $s(\Lambda)$ into $s'(\Lambda),$
$$
M(T):s(\Lambda)\xrightarrow{D_g}S({\mathbb R}^d)\xrightarrow{C_h}C_h({\mathcal S}({\mathbb R}^d))\xrightarrow{M(T)}s'(\Lambda).
$$ \par\medskip
Then we have \begin{equation}\label{op_matrix-1} T = D_h\circ M(T)\circ C_h\ \ \mbox{on}\ \ {\mathcal S}(\R^d)
\end{equation} and
 \begin{equation}\label{op_matrix-2} M(T) = M(T)\circ C_h \circ D_g = C_g\circ T\circ D_g\ \ \mbox{on}\ \ s(\Lambda).
\end{equation}
To prove (1) and (2) we only need to use density or weak$^\ast$ density arguments and the fact that $C_g, C_h:M^{p,q}_m({\mathbb R}^d)\to \ell_m^{p,q}(\Lambda)$ and $D_g, D_h:\ell_m^{p,q}(\Lambda)\to M^{p,q}_m({\mathbb R}^d)$ are bounded for $1\leq p,q < \infty$ and weak$^\ast$ continuous for $1\leq p,q\leq \infty.$
\par\medskip
To finish we prove (3). From the hypothesis we deduce that the identities (\ref{op_matrix-1}) and (\ref{op_matrix-2}) hold on $M^{p,q}_{m_1}(\R^d)$ and $\ell^{p,q}_{m_1}(\Lambda)$ respectively and the conclusion follows.

\end{proof}
\par\medskip
In the applications to the FIOs we will always consider $m_1 = m\circ\chi$ and $m_2 = m.$ In the special case of PSDOs we will have $m_1 = m_2 = m.$

\section{Compactness of FIOs}

\subsection{FIOs on $M^p_m$}\label{sec:Mp}

Our aim is to discuss compactness properties for a FIO $T$ whose phase is tame and with symbol $\sigma\in M^\infty_{1\otimes v_{s_0}}(\R^{2d})$ for some $s_0 > 2d.$ Through this section we fix a lattice $\Lambda = \alpha{\mathbb Z}^d \times \beta{\mathbb Z}^d$ and a Parseval frame ${\mathcal G}(g,\Lambda)$ with $g\in {\mathcal S}({\mathbb R}^d).$ As proved in \cite{Cordero_2012_approximation}, we have an estimate
\begin{equation}\label{eq:matrix-decay-general}
\left|\langle T\pi(\l)g, \pi(\mu)g \rangle \right| \leq C \langle \chi(\l)-\mu \rangle^{-s_0}\ \ \forall \lambda,\mu\in \Lambda.
\end{equation} Observe that any symbol satisfying condition (\ref{simbolo}) belongs to $M^\infty_{1\otimes v_{2N}}.$

 The estimate (\ref{eq:matrix-decay-general}) together with the results of subsection \ref{sec:matrix_representation} suggest that we should consider operators on sequence spaces defined in terms of a matrix $A = \left(a_{\gamma,\gamma'}\right)_{\gamma,\gamma'\in \Lambda}$ with the property that the decay of the coefficient $a_{\gamma,\gamma'}$ is determined by the distance of $(\gamma, \gamma')$ to the graph of $\gamma = \chi(\gamma').$ For convenience we will replace the canonical transformation $\chi$ by an appropriate discrete version $\chi':\Lambda\to \Lambda,$ defined as follows. We fix a symmetric relatively compact fundamental domain $Q$ of $\Lambda$ and, for every $\lambda\in \Lambda,$ decompose any
$\chi(
\lambda) = r_\lambda + \chi'(\lambda)$ where $\chi'(\lambda)\in \Lambda$ and $r_\lambda\in Q.$ Since $\chi^{-1}$ is Lipschitz continuous there is $L > 0$ such that $\chi'(\lambda) = \chi'(\mu)$ implies
$$
a:= 2\sup_{u\in Q}\|u\|\geq \|\chi(\lambda) - \chi(\mu)\|\geq L\|\lambda - \mu\|.
$$ Hence
$$
\chi'^{-1}\left(\{\chi'(\lambda)\}\right) = \left\{\mu\in \Lambda:\ \chi'(\mu) = \chi'(\lambda)\right\}$$ is contained in $\overline{B\left(\lambda,\frac{a}{L}\right)} \cap \Lambda,
$ which is a finite set whose cardinal does not depend on $\lambda$. This suggests the following definition.

\begin{definition}\label{matrix_permutation}{\rm Let $v$ be a submultiplicative weight on ${\mathbb R}^{2d}$ and assume that $\psi:\Lambda \to \Lambda$ satisfies
$$
M = \sup_{\lambda\in \Lambda}\mbox{card}\ \psi^{-1}\left(\{\lambda\}\right) < \infty.$$ We define ${\mathcal C}_{v,\psi}(\Lambda)$ as the set of all matrices $A = \left(a_{\gamma,\gamma'}\right)_{\gamma,\gamma'\in \Lambda}$ such that
$$
\norm{A}_{{\mathcal C}_{v,\psi}} = \sum_{\gamma\in \Lambda}v(\gamma)\cdot\sup_{\lambda\in \Lambda}\left|a_{\psi(\lambda)+\gamma, \lambda}\right| < \infty.$$
 }
\end{definition}

\begin{proposition}\label{prop:Gabor_matrix_in_class}{\rm Let $T$ be a FIO whose phase $\Phi$ is tame and $\sigma\in M^\infty_{1\otimes v_{s_0}}(\R^{2d}),$ $s_0 > 2d.$ Then, for every $0\leq s < s_0-2d$ we have
$$
M(\s,\p)\in \CC_{v_s,\chi'}.
$$
}
\end{proposition}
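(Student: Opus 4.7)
The plan is to reduce the claim to the off-diagonal decay estimate (\ref{eq:matrix-decay-general}) by substituting $\mu = \chi'(\lambda) + \gamma$ and then controlling the error $\chi(\lambda) - \chi'(\lambda) = r_\lambda \in Q$ uniformly in $\lambda$.

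First I would write out the entries of the Gabor matrix at the relevant indices: setting $a_{\mu,\lambda} = \langle T\pi(\lambda)g,\pi(\mu)g\rangle$, the estimate (\ref{eq:matrix-decay-general}) gives, for every $\lambda,\gamma\in\Lambda$,
\begin{equation*}
\bigl|a_{\chi'(\lambda)+\gamma,\,\lambda}\bigr|
\;\leq\; C\,\langle \chi(\lambda)-\chi'(\lambda)-\gamma\rangle^{-s_0}
\;=\; C\,\langle r_\lambda - \gamma\rangle^{-s_0}.
\end{equation*}

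Next I would exploit that $Q$ is relatively compact, so $R := \sup_{\lambda\in\Lambda}\|r_\lambda\| < \infty$. Using submultiplicativity of $\langle\cdot\rangle$, namely $\langle \gamma\rangle = \langle (\gamma-r_\lambda)+r_\lambda\rangle \leq \langle\gamma-r_\lambda\rangle\,\langle r_\lambda\rangle \leq \langle R\rangle\,\langle r_\lambda-\gamma\rangle$, I obtain a uniform bound
\begin{equation*}
\langle r_\lambda - \gamma\rangle^{-s_0} \;\leq\; \langle R\rangle^{s_0}\,\langle\gamma\rangle^{-s_0}.
\end{equation*}
Taking the supremum over $\lambda\in\Lambda$ therefore yields
\begin{equation*}
\sup_{\lambda\in\Lambda}\bigl|a_{\chi'(\lambda)+\gamma,\,\lambda}\bigr| \;\leq\; C'\,\langle\gamma\rangle^{-s_0}.
\end{equation*}

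To finish, I would multiply by the weight $v_s(\gamma) = \langle\gamma\rangle^s$ and sum over $\gamma\in\Lambda$:
\begin{equation*}
\|M(\sigma,\Phi)\|_{\mathcal{C}_{v_s,\chi'}}
\;=\; \sum_{\gamma\in\Lambda}\langle\gamma\rangle^s \sup_{\lambda\in\Lambda}\bigl|a_{\chi'(\lambda)+\gamma,\,\lambda}\bigr|
\;\leq\; C'\sum_{\gamma\in\Lambda}\langle\gamma\rangle^{s-s_0}.
\end{equation*}
Since $\Lambda = \alpha\Z^d\times\beta\Z^d\subset\R^{2d}$, this series converges exactly when $s_0-s > 2d$, which is the hypothesis $0\leq s < s_0-2d$. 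The finite-multiplicity condition required by Definition \ref{matrix_permutation} has already been verified for $\psi = \chi'$ in the paragraph preceding the definition, so the proof is complete.

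The only genuinely delicate point is the passage from the canonical transformation $\chi$ to its discretization $\chi'$; once the uniform bound $R = \sup_\lambda\|r_\lambda\| < \infty$ is in hand, the argument reduces to a routine weighted summation. No serious obstacle is expected, since the summability threshold $s_0-2d$ matches the decay rate inherited from $\sigma \in M^{\infty}_{1\otimes v_{s_0}}(\R^{2d})$ through the almost-diagonalization estimate of \cite{Cordero_2012_approximation}.
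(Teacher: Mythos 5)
Your proposal is correct and follows essentially the same route as the paper's own proof: substitute $\mu=\chi'(\lambda)+\gamma$ into the decay estimate (\ref{eq:matrix-decay-general}), absorb the bounded error $r_\lambda\in Q$ via submultiplicativity of $v_{s_0}$, and sum $\langle\gamma\rangle^{s-s_0}$ over the $2d$-dimensional lattice using $s<s_0-2d$. The only (cosmetic) difference is that you bound the $Q$-contribution by $\langle R\rangle^{s_0}$ with $R=\sup_{\lambda}\|r_\lambda\|$ where the paper uses $\max\{v_{s_0}(r):r\in Q\}$ directly.
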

\begin{proof}
 We put $a_{\mu,\lambda} = \langle T\pi(\l)g, \pi(\mu)g \rangle.$ We have to show that
 $$
 \sum_{\gamma\in \Lambda}v_s(\gamma)\cdot\sup_{\lambda\in \Lambda}\left|a_{\chi'(\lambda)+\gamma,\lambda}\right| < \infty.
 $$ According to \cite[Theorem 3.3]{Cordero_2012_approximation},
\begin{equation*}
\left|\langle T\pi(\l)g, \pi(\mu)g \rangle \right| \leq C \langle \chi(\l)-\mu \rangle^{-s_0}= C (v_{s_0}( \chi(\l)-\mu))^{-1}
\end{equation*} for some constant $C$. Since there is $r_{\l}\in Q$ such that $\chi(\l) = \chi'(\l) + r_{\l}$, we obtain
$$
\begin{array}{*2{>{\displaystyle}ll}}
|a_{\chi'(\lambda)+\gamma, \lambda}| & = |\langle T\pi(\l)g, \pi(\chi'(\l) + \gamma)g \rangle| \\ & \\ &
\leq C (v_{s_0}( \chi(\l)-\chi'(\l)-\gamma))^{-1} \\ & \\ & = \frac{C}{v_{s_0}(r_{\l}-\gamma)} \leq \frac{C v_{s_0}(r_{\l})}{v_{s_0}(\gamma)} \leq \frac{C R}{v_{s_0}(\gamma)}
\end{array}
$$ where $R = \max\{v_{s_0}(r):r \in Q\}.$ Finally, using that $2d < s_0-s,$
$$
\sum_{\gamma\in \Lambda}v_s(\gamma)\cdot\sup_{\lambda\in \Lambda}\left|a_{\chi'(\lambda)+\gamma, \lambda}\right| \leq C R\sum_{\gamma \in \Lambda}\frac{v_s(\gamma)}{v_{s_0}(\gamma)} < \infty.
$$
\end{proof}

The following almost diagonal map will play an important role when discussing compactness properties of operators defined in terms of matrices in $\CC_{v_s,\psi}.$
\begin{definition}\label{permutation}{\rm Let $\psi:\Lambda \to \Lambda$ be as in Definition \ref{matrix_permutation} and $a\in {\mathbb C}^\Lambda.$ Then  $$D_{a,\psi}:{\mathbb C}^\Lambda \to {\mathbb C}^\Lambda$$ is defined by $D_{a,\psi}(x) = y$ where $$
y_\gamma = \left\{\begin{array}{*3{>{\displaystyle}l}}
    0 & \mbox{ if } & \gamma \notin \psi(\Lambda)\\
    &  &  \\
     \sum_{\psi(\lambda)=\gamma}a_\lambda x_\lambda  & \mbox{ if } & \gamma \in \psi(\Lambda)\\
     \end{array}
     \right.$$

In particular, $D_{a,\psi}(e_\gamma)=a_\gamma e_{\psi(\gamma)}.$ Moreover, $D_{a,\psi}\left({\mathbb C}^{(\Lambda)}\right) \subset {\mathbb C}^{(\Lambda)}.$
}
\end{definition}
\par\medskip
We observe that the transposed map
$$D^t_{a,\psi}:{\mathbb C}^{(\Lambda)} \to  {\mathbb C}^{(\Lambda)},$$ is given by
$$
(D^t_{a,\psi}(x))_\lambda = \left(D^t_{a,\psi}(x), e_\lambda\right) = \left(x, a_\lambda e_{\psi(\lambda)}\right) = a_\lambda x_{\psi(\lambda)}.$$ In fact, $D^t_{a,\psi}$ can be extended as a map from ${\mathbb C}^{\Lambda}$ into itself. In the case that $a$ is the constant sequence equal $1$ the map $D_{a,\psi}$ is denoted by $I_{\psi}.$ Then, for an arbitrary $a\in {\mathbb C}^\Lambda$ we have
$$
D_{a,\psi} = I_{\psi}\circ D_a.
$$ When $\psi$ is the identity, $D_{a,\psi}$ is just the diagonal operator $D_a.$

\begin{lemma}\label{lem:descomponerpsi}{\rm Let $$M = \sup_{\gamma\in \Lambda}\mbox{card}\left(\psi^{-1}(\{\gamma\})\right).$$ Then, there is a partition $\Lambda = \bigcup_{j=1}^M \Lambda_j$ with the property that
$\psi$ is injective when restricted to each $\Lambda_j.$
 }
\end{lemma}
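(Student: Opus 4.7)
The plan is to construct the partition by coloring each element of $\Lambda$ according to its position within its fiber under $\psi$. The key observation is that the statement is a purely set-theoretic/combinatorial fact about maps with uniformly bounded fibers, so the lattice structure plays no role.

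First, for each $\gamma \in \psi(\Lambda)$, the fiber $F_\gamma := \psi^{-1}(\{\gamma\})$ is a nonempty finite set with $|F_\gamma| \leq M$. Choose any enumeration $F_\gamma = \{\lambda^\gamma_1,\ldots,\lambda^\gamma_{k_\gamma}\}$ with $k_\gamma \leq M$, so that each $\lambda \in \Lambda$ receives a well-defined index $j(\lambda) \in \{1,\ldots,M\}$ (namely the position of $\lambda$ in the chosen enumeration of $F_{\psi(\lambda)}$). Then define
$$
\Lambda_j := \{\lambda \in \Lambda : j(\lambda) = j\}, \qquad j = 1,\ldots,M.
$$
By construction the $\Lambda_j$ are pairwise disjoint and their union is all of $\Lambda$ (every $\lambda$ belongs to its own fiber and therefore receives an index).

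It remains to verify injectivity of $\psi\vert_{\Lambda_j}$. If $\lambda,\mu \in \Lambda_j$ satisfy $\psi(\lambda) = \psi(\mu) = \gamma$, then both lie in $F_\gamma$ and both occupy position $j$ in the chosen enumeration of $F_\gamma$; by uniqueness of the enumeration, $\lambda = \mu$. This gives the desired conclusion.

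There is no genuine obstacle here: the only content is the elementary combinatorial fact that a map with fibers of size at most $M$ admits a partition of its domain into $M$ pieces on each of which the map is injective. The enumeration-by-fiber construction makes this immediate, and no appeal to the weight $v$, the submultiplicativity, or the structure of $\Lambda$ as a lattice is needed.
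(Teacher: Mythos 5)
Your proof is correct. The paper in fact gives no proof of this lemma at all (it is stated as evident), and your enumeration-by-fiber argument is precisely the standard, purely combinatorial justification the authors presumably had in mind; the only cosmetic remark is that some of the sets $\Lambda_j$ may be empty when a fiber has fewer than $M$ elements, which is harmless for the way the lemma is used in Proposition \ref{prop:cont-diagonal_permuted}.
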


Let $m=(m_{\lambda})_{\lambda\in \Lambda}$ be a positive sequence. For any $\psi:\Lambda\to \Lambda$ as in Definition \ref{matrix_permutation} we denote by $m\circ\psi$ the sequence
$$
m\circ\psi = \left(m_{\psi(\lambda)}\right)_{\lambda\in \Lambda}.
$$

\begin{proposition}\label{prop:cont-diagonal_permuted}{\rm Let $\psi :\Lambda \to \Lambda$ be as in Definition \ref{matrix_permutation}, $a=(a_{\lambda})_{\lambda\in \Lambda}$ a sequence of complex numbers, $m=(m_{\lambda})_{\lambda\in \Lambda}$ a positive sequence and $p\in [1,\infty].$ The following conditions are equivalent:
\begin{itemize}
 \item[(1)] $D_{a,\psi}$ is continuous on $\ell^2(\Lambda).$
 \item[(2)] $D_{a,\psi}$ is continuous from $\ell^p_{m\circ\psi}(\Lambda)$ to $\ell^{p}_m(\Lambda).$
 \item[(3)] $a\in \ell^\infty(\Lambda).$
\end{itemize}
}
\end{proposition}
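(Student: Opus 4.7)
The plan is to establish the cycle $(3)\Rightarrow (2)\Rightarrow (1)\Rightarrow (3)$. The implication $(2)\Rightarrow (1)$ is immediate by specializing to $p=2$ and $m\equiv 1$, in which case both $\ell^p_{m\circ\psi}$ and $\ell^p_m$ reduce to $\ell^2(\Lambda)$. For $(1)\Rightarrow (3)$, I would test $D_{a,\psi}$ on the canonical unit vectors: by the very definition of $D_{a,\psi}$ we have $D_{a,\psi}(e_\gamma)=a_\gamma e_{\psi(\gamma)}$, so that $\|D_{a,\psi}(e_\gamma)\|_{\ell^2}=|a_\gamma|$ for every $\gamma\in\Lambda$. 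Continuity on $\ell^2(\Lambda)$ then forces $\sup_\gamma|a_\gamma|\leq \norm{D_{a,\psi}}<\infty$, i.e. $a\in\ell^\infty(\Lambda)$.

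The substantive step is $(3)\Rightarrow (2)$, and here the main tool is Lemma \ref{lem:descomponerpsi}: fix a partition $\Lambda=\bigcup_{j=1}^M\Lambda_j$ such that $\psi_j:=\psi|_{\Lambda_j}$ is injective. For each $j$, I would introduce the auxiliary operator
$$
(\tilde D_j x)_\gamma=\begin{cases} a_\lambda x_\lambda & \text{if } \gamma=\psi(\lambda) \text{ for the unique } \lambda\in \Lambda_j,\\ 0 & \text{if } \gamma\notin \psi(\Lambda_j),\end{cases}
$$
so that $D_{a,\psi}x=\sum_{j=1}^M \tilde D_j x$ pointwise on $\Lambda$. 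Because $\psi_j$ is injective, the change of variables $\gamma=\psi(\lambda)$ gives, for $1\leq p<\infty$,
$$
\norm{\tilde D_j x}_{\ell^p_m}^p=\sum_{\lambda\in \Lambda_j}|a_\lambda|^p|x_\lambda|^p m_{\psi(\lambda)}^p\leq \norm{a}_\infty^p\sum_{\lambda\in\Lambda_j}|x_\lambda|^p (m\circ\psi)_\lambda^p\leq \norm{a}_\infty^p\norm{x}_{\ell^p_{m\circ\psi}}^p,
$$
with the obvious $\sup$-version when $p=\infty$. Summing over $j=1,\dots,M$ (using the triangle inequality for $p<\infty$ or just the max for $p=\infty$) yields $\norm{D_{a,\psi}}_{\ell^p_{m\circ\psi}\to\ell^p_m}\leq M\,\norm{a}_\infty$.

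The only mild subtlety is bookkeeping: one must be careful that the change of variables $\gamma=\psi(\lambda)$ inside $\Lambda_j$ really pairs the target weight $m_\gamma$ with the source weight $m_{\psi(\lambda)}=(m\circ\psi)_\lambda$, which is exactly why the domain has to be $\ell^p_{m\circ\psi}$ rather than $\ell^p_m$. Apart from this, the argument is a direct computation and the finiteness $M<\infty$ built into Definition \ref{matrix_permutation} guarantees that the decomposition has only finitely many pieces, so no convergence issue arises. The result is independent of $p$, which will be essential in the compactness arguments that follow.
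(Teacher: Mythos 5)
Your $(3)\Rightarrow(2)$ argument is correct and is essentially the paper's: partition $\Lambda$ into the $M$ pieces of Lemma \ref{lem:descomponerpsi} on which $\psi$ is injective, use the change of variables $\gamma=\psi(\lambda)$ to pair the target weight $m_\gamma$ with $(m\circ\psi)_\lambda$, and sum the $M$ pieces to get the bound $M\norm{a}_\infty$. Your $(1)\Rightarrow(3)$ via $D_{a,\psi}(e_\gamma)=a_\gamma e_{\psi(\gamma)}$ is also fine.

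The gap is the link $(2)\Rightarrow(1)$. In the proposition $p$ and $m$ are \emph{fixed}: condition (2) is the single assertion that $D_{a,\psi}:\ell^p_{m\circ\psi}(\Lambda)\to\ell^p_m(\Lambda)$ is bounded for that particular $p$ and that particular weight. You cannot ``specialize to $p=2$ and $m\equiv 1$''; that would presuppose boundedness for a different pair $(p,m)$, which is part of what must be proved. As written, your cycle yields $(3)\Rightarrow(2)$, $(3)\Rightarrow(1)$ (the $\ell^2$ instance of your $(3)\Rightarrow(2)$) and $(1)\Rightarrow(3)$, but nothing is ever deduced \emph{from} (2), so the equivalence is not closed. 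The repair is the weighted version of your own test-vector computation, which is exactly what the paper does: the vectors $e_\lambda/m_{\psi(\lambda)}$ are unit vectors of $\ell^p_{m\circ\psi}(\Lambda)$ and
$$
\norm{D_{a,\psi}\left(e_\lambda/m_{\psi(\lambda)}\right)}_{\ell^p_m}=|a_\lambda|\,\frac{m_{\psi(\lambda)}}{m_{\psi(\lambda)}}=|a_\lambda|,
$$
so (2) gives $\norm{a}_\infty\leq\norm{D_{a,\psi}}$, i.e.\ $(2)\Rightarrow(3)$ directly. With that one line your argument coincides with the paper's, which proves $(2)\Leftrightarrow(3)$ for an arbitrary fixed pair $(p,m)$ and observes that (1) is the instance $p=2$, $m\equiv 1$.
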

\begin{proof} It suffices to show the equivalence between conditions (2) and (3). Let us assume that condition (2) is satisfied. As $D_{a,\psi}(e_\lambda)=a_\lambda e_{\psi(\lambda)}$ then
$$\|D_{a,\psi}\|\geq \|D_{a,\psi}(\frac{e_\lambda}{m_{\psi(\lambda)}})\|_{\ell^p_m} = |a_\lambda|,$$ from where we get (3).
\par\medskip
To check that (3) implies (2) let us first assume that $a\in \ell^\infty(\Lambda)$ and the restriction of $\psi$ to the support of $a$ is injective. Then
 $$\norm{D_{a,\psi}(x)}_{\ell^p_m} = \norm{ (a_\lambda x_\lambda m_{\psi(\lambda)})_\lambda}_{\ell^p} = \norm{(a_\lambda x_\lambda)_\lambda}_{\ell^p_{m\circ\psi}}\leq \norm{a}_{\ell^\infty} \norm{x}_{\ell^p_{m\circ\psi}}.$$
\par\medskip
In the case that condition (3) is satisfied but $\psi$ is not injective on the support of $a$ we apply Lemma (\ref{lem:descomponerpsi}) and decompose
$$
a = \sum_{j=1}^M a^j,
$$ in such a way that the support of $a^j$ is contained in $\Lambda_j.$ Then $$D_{a,\psi} = \sum_{j=1}^M D_{a^j,\psi}
$$ is continuous from $\ell^p_{m\circ\psi}(\Lambda)$ to $\ell^{p}_m(\Lambda)$ and
$$
\|D_{a,\psi}\|_{\ell^p_ {m\circ\psi}\to\ell^p_m} \leq \sum_{j= 1}^M\|a^j\|_{\ell^\infty}\leq M\|a\|_{\ell^\infty}.
$$ Hence (3) implies (2) is proved.
\end{proof}
\par\medskip
The same argument shows that condition (3) in Proposition \ref{prop:cont-diagonal_permuted} is equivalent to being $D_{a,\psi}$ a bounded operator from $c_{0,m\circ\psi}(\lambda)$ into $c_{0,m}(\lambda).$
\par\medskip
In particular, $I_\psi:\ell^p_{m\circ\psi}(\Lambda) \to \ell^{p}_m(\Lambda)$ is continuous. We observe that, if $p\neq q$, the map $I_\psi$ need not be bounded on spaces $\ell^{p,q}_m(\Lambda).$

\begin{proposition}\label{A_cont_permuted}{\rm Let $m=(m_{\lambda})_{\lambda\in \Lambda}$ a $v$-moderate positive sequence, $A = \left(a_{\gamma,\gamma'}\right)_{\gamma,\gamma'\in \Lambda}\in {\mathcal C}_{v,\psi}(\Lambda)$ and $1\leq p \leq \infty$ be given. Then
\begin{itemize}
 \item[(1)] $A:\ell^{p}_{m\circ \psi}(\Lambda) \to \ell^{p}_{m}(\Lambda)$ is a bounded operator, which is also weak$^\ast$ continuous.
 \item[(2)] $A = \begin{displaystyle}\sum_{\gamma\in \Lambda}( T_\gamma \circ D_{a^\gamma, \psi})\end{displaystyle}$ where $a^\gamma:=(a_{\psi(\lambda)+ \gamma,\lambda})_{\lambda\in \Lambda}.$ The series converges absolutely.
\end{itemize}
}
\end{proposition}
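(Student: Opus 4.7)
The plan is to prove (2) first (the decomposition) and derive (1) from it, since the series estimate immediately yields the boundedness and also furnishes the identity claimed in (2).

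First I would verify the decomposition at the level of basis vectors on $\mathbb{C}^{(\Lambda)}$. By Definition~\ref{permutation}, $D_{a^{\gamma},\psi}(e_{\lambda}) = a^{\gamma}_{\lambda}\, e_{\psi(\lambda)} = a_{\psi(\lambda)+\gamma,\lambda}\, e_{\psi(\lambda)}$, and then $T_{\gamma}$ shifts the index, giving
\[
(T_{\gamma}\circ D_{a^{\gamma},\psi})(e_{\lambda}) = a_{\psi(\lambda)+\gamma,\lambda}\, e_{\psi(\lambda)+\gamma}.
\]
Summing over $\gamma\in\Lambda$ and reparametrizing $\mu = \psi(\lambda)+\gamma$ recovers $\sum_{\mu\in\Lambda} a_{\mu,\lambda}\, e_{\mu} = A e_{\lambda}$. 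So the identity in (2) holds pointwise on finitely supported sequences; absolute convergence in operator norm, once proven, extends it to $\ell^{p}_{m\circ\psi}(\Lambda)$ in the norm topology (and by weak$^{\ast}$ density in the case $p = \infty$).

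For the norm estimate of each summand, Proposition~\ref{prop:cont-diagonal_permuted} (applied with the sequence $a^{\gamma}$) gives
\[
\|D_{a^{\gamma},\psi}\|_{\ell^{p}_{m\circ\psi}\to\ell^{p}_{m}} \leq M\, \|a^{\gamma}\|_{\infty} = M\, \sup_{\lambda\in\Lambda}|a_{\psi(\lambda)+\gamma,\lambda}|,
\]
while the preliminaries on sequence spaces supply $\|T_{\gamma}\|_{\ell^{p}_{m}\to \ell^{p}_{m}} \leq C_{m}\, v(\gamma)$. Composing and summing over $\gamma$,
\[
\sum_{\gamma\in \Lambda} \|T_{\gamma}\circ D_{a^{\gamma},\psi}\| \leq C_{m} M \sum_{\gamma\in \Lambda} v(\gamma)\sup_{\lambda\in \Lambda}|a_{\psi(\lambda)+\gamma,\lambda}| = C_{m} M\, \|A\|_{\mathcal{C}_{v,\psi}} < \infty,
\]
which is exactly the absolute convergence claimed in (2) and yields a bounded operator on $\ell^{p}_{m\circ\psi}(\Lambda)\to\ell^{p}_{m}(\Lambda)$ satisfying $\|A\| \leq C_{m} M\, \|A\|_{\mathcal{C}_{v,\psi}}$.

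The point to handle with more care is the weak$^{\ast}$ continuity in the endpoint case $p = \infty$; for $p<\infty$ it is vacuous. Here I would note that each $T_{\gamma}$ and each $D_{a^{\gamma},\psi}$ is the transpose of a bounded operator between the preduals $\ell^{1}_{1/m}(\Lambda)$ and $\ell^{1}_{1/(m\circ\psi)}(\Lambda)$: for $T_{\gamma}$ this is standard, and for $D_{a^{\gamma},\psi}$ the transpose matrix $(D_{a^{\gamma},\psi})^{t}_{\lambda,\mu} = a^{\gamma}_{\lambda}\,\delta_{\mu,\psi(\lambda)}$ is precisely $D^{t}_{a^{\gamma},\psi}$ as in the remark following Definition~\ref{permutation}, which is bounded by the same argument (restricted to $p = 1$) as in Proposition~\ref{prop:cont-diagonal_permuted} combined with the finiteness of $M$. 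Hence each partial sum is weak$^{\ast}$ continuous. Because the series converges in operator norm and the correspondence between weak$^{\ast}$ continuous maps of dual spaces and bounded maps of preduals is an isometry, the sequence of preadjoints is Cauchy in norm and its limit has transpose equal to $A$; therefore $A$ itself is weak$^{\ast}$ continuous. The only subtlety I anticipate is keeping the bookkeeping of preduals straight when $\psi$ is not injective, which is precisely why Lemma~\ref{lem:descomponerpsi} was recorded and used in the proof of Proposition~\ref{prop:cont-diagonal_permuted}; applying that decomposition componentwise to $a^{\gamma}$ settles any ambiguity.
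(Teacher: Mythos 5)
Your proposal is correct, and your proof of part (2) is essentially the paper's: the same estimates $\|T_\gamma\|\le C_m v(\gamma)$ and $\|D_{a^\gamma,\psi}\|\le M\sup_\lambda|a_{\psi(\lambda)+\gamma,\lambda}|$, the same summation giving the bound $C_m M\|A\|_{{\mathcal C}_{v,\psi}}$, and the same identification of the resulting sum with $A$ on the vectors $e_\lambda$ followed by a (weak$^\ast$) density argument. Where you genuinely diverge is in the logical order: the paper proves (1) first and independently, by a Schur-type duality estimate on the transposed matrix $B=(a_{\gamma',\gamma})$ acting between the preduals $\ell^{q}_{1/m}$ and $\ell^{q}_{(1/m)\circ\psi}$, obtaining $A=B^t$ as a bounded, weak$^\ast$ continuous operator before any series decomposition appears; you instead derive (1) from the absolutely convergent series of (2), transporting weak$^\ast$ continuity through the preadjoints of the individual summands. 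Your route is more economical, since it avoids the separate double-sum computation, and your treatment of the preadjoints (including the observation that $D_{a^\gamma,\psi}=(D^t_{a^\gamma,\psi})^t$ with $D^t_{a^\gamma,\psi}$ bounded on the $\ell^1$-type preduals with constant $M\|a^\gamma\|_\infty$) is sound. What the paper's route buys, and what you should add in one line, is the verification that the matrix action itself --- the row sums $\sum_{\gamma'}a_{\gamma,\gamma'}x_{\gamma'}$ --- converges absolutely for \emph{every} $x\in\ell^{p}_{m\circ\psi}(\Lambda)$ and agrees with your norm-limit operator $S$, not merely on ${\mathbb C}^{(\Lambda)}$; this follows from the ${\mathcal C}_{v,\psi}$ estimate together with the inequality $1\le C_m\,m_{\psi(\lambda)}\,v(\gamma)/m_{\psi(\lambda)+\gamma}$, exactly as in the paper's claim for $B$, and once noted your identification of $S$ with $A$ on all of $\ell^p_{m\circ\psi}(\Lambda)$ (including $p=\infty$) is complete. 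Both arguments yield the same operator-norm bound.
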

\begin{proof} (1) It is easier to deal with the transposed map, so we first consider $b_{\gamma,\gamma'} = a_{\gamma',\gamma}$ and {\it claim} that $B = \left(b_{\gamma,\gamma'}\right)_{\gamma,\gamma'\in \Lambda}$ defines a bounded operator $B:\ell^{q}_{\frac{1}{m}}(\Lambda) \to \ell^{q}_{\frac{1}{m}\circ \psi}(\Lambda)$ for every $1\leq q\leq \infty.$ We should remark here that the class ${\mathcal C}_{v,\psi}(\Lambda)$ need not be closed under taking transposed. Instead we have
\begin{equation}\label{eq:transposed_matrix}
\sum_{\gamma\in \Lambda}v(\gamma)\cdot\sup_{\lambda\in \Lambda}\left|b_{\lambda,\psi(\lambda)+\gamma}\right| < \infty.
\end{equation}
 Using that for every $\lambda \in \Lambda$ one has $\Lambda= \psi(\lambda)+\Lambda$, we may write
$$
\displaystyle\sum_{\gamma\in\Lambda}\left|b_{\lambda,\gamma}x_\gamma\right| =\sum_{\gamma\in\Lambda}\left|b_{\lambda,\psi(\lambda)+\gamma}x_{\psi(\lambda)+\gamma}\right|.$$ From (\ref{eq:transposed_matrix}) and inequality
\begin{equation}\label{eq:cont-diagonal_permuted}
 1\leq C_m\frac{m_{\psi(\lambda)}}{ m_{\psi(\lambda)+\gamma}}v(\gamma)                                                                                                                                                                                                                                                                                                                                                                                                                                                                                                                    \end{equation} we conclude that $$
B:\ell^{q}_{\frac{1}{m}}(\Lambda) \to {\mathbb C}^\Lambda
$$ is a well-defined operator. To prove that $Bx\in \ell^{q}_{\frac{1}{m}\circ\psi}(\Lambda)$ it is enough to check that
$$
\sum_{\lambda\in \Lambda}\left|\left(B x\right)_\lambda y_{\lambda}\right| < \infty
$$ for every $y\in \ell^{q'}_{m\circ\psi}.$ Here $q'$ is the usual conjugate exponent. To this end we denote $\phi(\gamma) = v(\gamma)\sup_{\lambda}\left|b_{\lambda,\psi(\lambda)+\gamma}\right|.$ Using  again the inequality (\ref{eq:cont-diagonal_permuted}) we obtain
$$
\begin{array}{ll}
\displaystyle\sum_{\lambda\in\Lambda}\left|\left(B x\right)_\lambda y_{\lambda}\right| & \begin{displaystyle}\leq \sum_{\lambda\in\Lambda}\sum_{\gamma\in\Lambda}\left|b_{\lambda,\gamma}x_\gamma y_\lambda\right| = \sum_{\lambda\in\Lambda}\sum_{\gamma\in\Lambda}\left|b_{\lambda,\psi(\lambda)+\gamma}x_{\psi(\lambda)+\gamma} y_{\lambda\in\Lambda}\right| \end{displaystyle}\\ & \\ &\begin{displaystyle} \leq C_m\sum_{\lambda\in\Lambda}\sum_{\gamma\in\Lambda}\left|b_{\lambda,\psi(\lambda)+\gamma}\right| v(\gamma)\frac{\left|x_{\psi(\lambda)+\gamma}\right|}{m_{\psi(\lambda)+\gamma}}\left|y_\lambda\right|m_{\psi(\lambda)}\end{displaystyle}\\ & \\ & \begin{displaystyle}\leq C_m \sum_{\gamma\in\Lambda} \phi(\gamma)\sum_{\lambda\in\Lambda}\frac{\left|x_{\psi(\lambda)+\gamma}\right|}{m_{\psi(\lambda)+\gamma}}\left|y_\lambda\right|m_{\psi(\lambda)}\end{displaystyle}\\ & \\ & \begin{displaystyle}\leq M C_m\|x\|_{\ell^{q}_{\frac{1}{m}}}\cdot\|y\|_{\ell^{q'}_{m\circ\psi}}\cdot \|A\|_{{\mathcal C}_{v,\psi}},\end{displaystyle}
\end{array}
$$ where $M$ is the constant in Definition \ref{matrix_permutation}. The claim is proved. Moreover, $B$ also defines a bounded operator from $c_{0,\frac{1}{m}}$ to $c_{0,\frac{1}{m}\circ\psi}.$ In fact, $B:\ell^\infty_{\frac{1}{m}}\to\ell^\infty_{\frac{1}{m}\circ \psi}$ is continuous, $B\left({\mathbb C}^{(\Lambda)}\right)\subset \ell^1_{\frac{1}{m}\circ \psi}\subset c_{0,\frac{1}{m}\circ\psi}$ and $c_{0,\frac{1}{m}}$ is the closure of ${\mathbb C}^{(\Lambda)}$ on $\ell^\infty_{\frac{1}{m}}.$ Consequently, for every $1\leq p\leq \infty,$ the transposed map defines a bounded operator $ A = B^t:\ell^{p}_{m\circ \psi}(\Lambda) \to \ell^{p}_{m}(\Lambda)$ which is also weak$^\ast$ continuous. The proof of (1) is complete.
\par\medskip
(2) Since $m$ is $v$-moderate with constant $C_m$ we have
$$
\|T_\gamma:\ell^{p}_{m}(\Lambda)\to \ell^{p}_{m}(\Lambda)\|\leq C_m v(\gamma).$$ Also
$$
\|D_{a^\gamma , \psi}:\ell^{p}_{m\circ\psi}(\Lambda)\to \ell^{p}_{m}(\Lambda)\| \leq M \sup_\lambda\left|a_{\psi(\lambda)+\gamma,\lambda}\right|, $$ where $M$ is the constant in Definition \ref{matrix_permutation}. Hence
$$
\begin{displaystyle}\sum_{\gamma \in \Lambda}\|T_\gamma \circ D_{a^\gamma, \psi}\|\leq M\sum_{\gamma \in \Lambda}C_m v(\gamma)\sup_{\lambda\in \Lambda}|a_{\psi(\gamma)+\lambda,\lambda}|\end{displaystyle} < \infty.$$ Consequently
$$
S:= \sum_{\gamma\in \Lambda}( T_\gamma \circ D_{a^\gamma,\psi})
$$ defines a bounded operator from $\ell^{p}_{m\circ\psi}(\Lambda)$ into $\ell^{p}_{m}(\Lambda).$ With a similar argument we can decompose the transposed map in terms of operators $D^t_{a^\gamma,\psi} = I_\psi^t\circ D_{a^\gamma},$ from where we conclude that $S$ is also weak$^\ast$ continuous. Moreover, $A$ and $S$ coincide on $\{e_\lambda: \lambda \in \Lambda \},$ from where the result follows. In fact,
$$
\begin{array}{*2{>{\displaystyle}l}}
 \left<S(e_\lambda), e_\mu\right> & = \left<\sum_{\gamma\in\Lambda} a_{\psi(\lambda)+\gamma,\lambda} e_{\psi(\lambda)+\gamma}, e_\mu\right> = \left<\sum_{t\in\Lambda} a_{t,\lambda} e_t, e_\mu\right> \\ & \\ & = \left<A(e_\lambda), e_\mu\right>.
\end{array}
$$

\end{proof}

The following abstract result will be useful to obtain necessary conditions for the compactness of FIOs. We include a proof for the convenience of the reader.

\begin{proposition}\label{convpredual}{\rm Let $E$ and $F$ be Banach spaces and $T:E \to F$ a compact operator. We assume that $E=G'$ and $F= R'$ are dual Banach spaces, $T^{t}(R)\subseteq G$ and $\{x_i\}_{i \in I}$ is a sequence that converges to $x$ in the weak$^\ast$ topology $\sigma (E, G).$ Then $\{(T(x_i))\}_{i \in I}$ converges to $T(x).$
}
\end{proposition}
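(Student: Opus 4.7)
The plan is to combine weak$^{\ast}$-weak$^{\ast}$ continuity of $T$ (which is exactly what the hypothesis $T^{t}(R)\subseteq G$ gives) with the compactness of $T$, via a standard subsequence extraction argument. The target convergence should be interpreted in norm, since weak$^{\ast}$ convergence of $\{T(x_i)\}$ follows trivially from the first step.

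First I would record the immediate consequence of $T^{t}(R)\subseteq G$: for any $r\in R$ and any net (in particular, any sequence) $\{y_i\}$ with $y_i\to y$ in $\sigma(E,G)$, one has
\[
\langle T(y_i),r\rangle = \langle y_i, T^{t}(r)\rangle \to \langle y, T^{t}(r)\rangle = \langle T(y),r\rangle,
\]
so $T$ is continuous from $(E,\sigma(E,G))$ into $(F,\sigma(F,R))$. Applying this to our sequence gives $T(x_i)\to T(x)$ in $\sigma(F,R)$. Next I would show that $\{x_i\}$ is norm-bounded in $E$: since $E=G'$ and $\{x_i\}$ is weak$^{\ast}$-convergent, the scalar sequences $\{\langle x_i,g\rangle\}_i$ are bounded for every $g\in G$; because $G$ is a Banach space, the Banach--Steinhaus theorem yields $\sup_i \|x_i\|_E<\infty$. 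Compactness of $T$ then ensures that $\{T(x_i)\}$ is relatively norm-compact in $F$.

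Now I would close the argument by contradiction. Suppose $T(x_i)\not\to T(x)$ in norm. Then there exist $\varepsilon>0$ and a subsequence $\{x_{i_k}\}$ with $\|T(x_{i_k})-T(x)\|\geq \varepsilon$ for all $k$. By relative compactness, pass to a further subsequence $\{T(x_{i_{k_j}})\}$ that converges in norm to some $y\in F$. Norm convergence implies weak$^{\ast}$ convergence, so $T(x_{i_{k_j}})\to y$ in $\sigma(F,R)$; but we already know $T(x_{i_{k_j}})\to T(x)$ in $\sigma(F,R)$ as well. Since $R$ separates points of $F=R'$, weak$^{\ast}$ limits are unique, hence $y=T(x)$, contradicting $\|y-T(x)\|\geq \varepsilon$.

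There is no serious obstacle here; the argument is a clean packaging of three ingredients (weak$^{\ast}$ continuity from the transpose hypothesis, boundedness of weak$^{\ast}$ convergent sequences, and the subsequence-of-subsequence principle for compact operators). The only point worth being careful about is using \emph{sequential} Banach--Steinhaus to obtain boundedness of $\{x_i\}$, which requires $G$ to be a Banach space rather than merely a normed space; this is given in the statement.
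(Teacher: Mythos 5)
Your proof is correct and follows essentially the same route as the paper's: establish norm-boundedness of $\{x_i\}$ via Banach--Steinhaus, use the transpose hypothesis to get weak$^{\ast}$ convergence of $\{T(x_i)\}$ to $T(x)$, and then run the subsequence-extraction contradiction argument using compactness of $T$ and uniqueness of weak$^{\ast}$ limits. The only difference is cosmetic: you spell out the duality computation $\langle T(y_i),r\rangle=\langle y_i,T^{t}(r)\rangle$ explicitly, which the paper leaves implicit.
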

\begin{proof} We first check that $\{x_{i}\}_{i \in I}$ is a bounded sequence in $E.$ In fact,
$\{x_{i}\}_{{i} \in I}$ is a bounded sequence in $\sigma (E, G)$. If we consider the sequence of linear operators $\{\langle x_{i}, \cdot
\rangle\}_{{i} \in I}$, then for every $g \in G$, $\{\langle x_{i}, g
\rangle\}_{{i} \in I}$ is a bounded sequence. By Banach-Steinhaus' Theorem,
we obtain that $\{\langle x_{i}, \cdot \rangle\}_{{i} \in I}$ is uniformly bounded and we conclude that $\{x_{i}\}_{{i} \in I}$ is a bounded sequence in $E$. We assume that $\{T(x_i)\}_{i \in I}$ does not converge to $T(x)$ in norm. Then there are $\e>0$ and a sequence of indices $(i_k)_{k=1}^\infty \subset I$
such that, for every $k$,
$$
\|T(x_{i_k})-T(x)\|>\e.
$$
Since $T$ is a compact operator, there exists a subsequence $\{T(x_{{i_k}_t})\}_{t}$ converging to some $y \in F.$
Since $\{x_{{i_k}_t}\}_{t}$ $\sigma(E,G)$-converges to $x$ we conclude that
$\{T(x_{{i_k}_t})\}_{t}$ $\sigma(F,R)$-converges to $T(x).$ Since the norm convergence implies the $\sigma(F,R)$-convergence in $F$,we finally obtain that $y = T(x)$. Consequently, $\{T(x_{{i_k}_t})\}_{t}$ converges to $T(x)$ in norm, which is a contradiction.
\end{proof}

\begin{theorem}\label{compac_A_permutation}{\rm
Let $A = \left(a_{\gamma,\gamma'}\right)_{\gamma,\gamma'\in \Lambda}\in {\mathcal C}_{v,\psi}(\Lambda)$ and  $1\leq p \leq \infty$ be given. Then, $A:\ell^{p}_{m\circ\psi}(\Lambda) \to \ell^{p}_{m}(\Lambda)$ is a compact operator if and only if
 $$
a^\gamma:= \left(a_{\psi(\lambda)+\gamma,\lambda}\right)_{\lambda\in \Lambda}\in c_0(\Lambda)\ \ \forall \gamma\in \Lambda.$$
}
\end{theorem}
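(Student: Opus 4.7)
The plan is to handle the two implications separately, making essential use of the norm-convergent decomposition $A = \sum_{\gamma\in\Lambda} T_\gamma\circ D_{a^\gamma,\psi}$ established in Proposition \ref{A_cont_permuted}(2).

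For the sufficient direction, assume $a^\gamma\in c_0(\Lambda)$ for every $\gamma$. By Proposition \ref{prop:cont-diagonal_permuted} the operator norm of $D_{a^\gamma,\psi}$ is controlled (up to the factor $M$) by $\|a^\gamma\|_\infty$, and truncating $a^\gamma$ to its finite sections yields finite rank operators approximating $D_{a^\gamma,\psi}$ in operator norm; hence each $D_{a^\gamma,\psi}:\ell^p_{m\circ\psi}(\Lambda)\to \ell^p_m(\Lambda)$ is compact, and so is $T_\gamma\circ D_{a^\gamma,\psi}$. The estimate
$$
\|T_\gamma\circ D_{a^\gamma,\psi}\|\leq C_m M v(\gamma)\sup_\lambda|a_{\psi(\lambda)+\gamma,\lambda}|
$$
forces absolute convergence of the series, so $A$ is a uniform limit of compact operators and therefore compact.

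For necessity I argue by contradiction. Suppose $a^{\gamma_0}\notin c_0(\Lambda)$ for some $\gamma_0$; pick $\delta>0$ and distinct $\lambda_k\in\Lambda$ escaping every finite set with $|a_{\psi(\lambda_k)+\gamma_0,\lambda_k}|\geq \delta$. Because $\psi$ has fibers of cardinality at most $M$, the $\psi$-preimage of any finite set is finite, so also $\psi(\lambda_k)\to\infty$. Consider the unit vectors $x_k:=e_{\lambda_k}/m_{\psi(\lambda_k)}$ in $\ell^p_{m\circ\psi}(\Lambda)$. Evaluating $Ax_k-Ax_j$ at the coordinate $\mu_k:=\psi(\lambda_k)+\gamma_0$ and using $v$-moderacy gives
$$
|(Ax_k)_{\mu_k}|\,m_{\mu_k} \geq \frac{\delta\, m_{\psi(\lambda_k)+\gamma_0}}{m_{\psi(\lambda_k)}}\geq \frac{\delta}{C_m v(\gamma_0)}.
$$
For the cross term, rewrite $\mu_k=\psi(\lambda_j)+\gamma_{jk}$ with $\gamma_{jk}:=\gamma_0+\psi(\lambda_k)-\psi(\lambda_j)$; a second application of $v$-moderacy yields
$$
|(Ax_j)_{\mu_k}|\,m_{\mu_k}\leq C_m v(\gamma_{jk})\sup_\nu|a^{\gamma_{jk}}_\nu|.
$$
With $j$ fixed and $k\to\infty$ we have $\gamma_{jk}\to\infty$, and this upper bound tends to zero because $\sum_\gamma v(\gamma)\sup_\nu|a^\gamma_\nu|<\infty$. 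A diagonal extraction then produces a subsequence of $(Ax_k)$ whose pairwise distances in $\ell^p_m(\Lambda)$ remain above $\delta/(2C_m v(\gamma_0))$, contradicting compactness of $A$.

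The main difficulty lies in controlling the cross term $(Ax_j)_{\mu_k}$: that matrix entry is not at a diagonal position relative to $\psi(\lambda_j)$, so one must reindex via $\gamma_{jk}$ and invoke the full summability built into $\mathcal{C}_{v,\psi}(\Lambda)$ rather than the mere boundedness of individual entries. A virtue of this direct, coordinate-wise argument is that it treats the whole range $1\leq p\leq\infty$ uniformly, bypassing the weak-$*$ compactness considerations encoded in Proposition \ref{convpredual} that are usually invoked when $p\in\{1,\infty\}$.
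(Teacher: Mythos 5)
Your proof is correct. The sufficiency direction coincides with the paper's: both rest on the decomposition $A=\sum_{\gamma}T_\gamma\circ D_{a^\gamma,\psi}$ from Proposition \ref{A_cont_permuted}(2), the compactness of each $D_{a^\gamma,\psi}=I_\psi\circ D_{a^\gamma}$ for $a^\gamma\in c_0(\Lambda)$, and the absolute convergence of the series of operator norms. For necessity, however, you take a genuinely different route. The paper applies Proposition \ref{convpredual}: the normalized unit vectors $e_\lambda/m_{\psi(\lambda)}$ form a weak$^\ast$-null sequence in $\ell^p_{m\circ\psi}(\Lambda)$, so compactness (together with the fact, from Proposition \ref{A_cont_permuted}, that $A^t$ maps the predual into the predual) forces $\|A(e_\lambda/m_{\psi(\lambda)})\|_{\ell^p_m}\to 0$, and the single coordinate estimate $|a_{\psi(\lambda)+\gamma,\lambda}|\leq C_m v(\gamma)\|A(e_\lambda/m_{\psi(\lambda)})\|$ then yields $a^\gamma\in c_0(\Lambda)$ for \emph{all} $\gamma$ at once. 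You instead argue by contradiction, exhibiting for a putative bad $\gamma_0$ a bounded sequence whose images are uniformly separated; the key point, which you identify and handle correctly, is the off-diagonal cross term $(Ax_j)_{\mu_k}$, controlled by reindexing $\mu_k=\psi(\lambda_j)+\gamma_{jk}$ and using that $v(\gamma)\sup_\nu|a^\gamma_\nu|\to 0$, a consequence of the summability defining $\mathcal C_{v,\psi}(\Lambda)$. The trade-off is instructive: the paper's argument is shorter and does not use the class membership of $A$ at all in the necessity direction (only compactness plus weak$^\ast$ continuity), but it leans on the duality machinery and on knowing that $A^t$ respects preduals; your argument is self-contained at the level of coordinates, treats $p=1$ and $p=\infty$ on the same footing without invoking Proposition \ref{convpredual}, but genuinely needs $A\in\mathcal C_{v,\psi}(\Lambda)$ (mere boundedness of the entries would not kill the cross term). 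Both proofs are complete and correct.
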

\begin{proof}
If $a^\gamma \in c_0(\Lambda)$ for every $\gamma \in \Lambda,$  then $D_{a^\gamma, \psi}= I_{\psi}\circ D_{a^\gamma}$ is compact for each $\gamma \in \Lambda.$ Hence, we can apply Proposition \ref{A_cont_permuted} to conclude that $A$ is a compact operator.
\par\medskip
Let us now assume that $A$ is compact. Since $\left(\frac{e_\lambda}{m_{\psi(\lambda)}}\right)_{\lambda\in \Lambda}$ converges to zero in the weak$^\ast$ topology of $\ell^{p}_{m\circ\psi}(\Lambda),$ we can apply Proposition \ref{convpredual} to conclude that $\left(A(\frac{e_\lambda}{m_{\psi(\lambda)}}\right)_{\lambda \in \Lambda}$ converges to $0$. Now, we fix $\gamma \in \Lambda$ and use that $$\frac{m_{\psi(\lambda)+\gamma}}{m_{\psi(\lambda)}}|a_{\psi(\lambda)+\gamma,\lambda}|\leq \norm{A(\frac{e_\lambda}{m_{\psi(\lambda)}})}_{\ell^{p}_{m}(\Lambda)}.$$ Since $m$ is $v$-moderate we obtain $$\left|a_{\psi(\lambda)+\gamma,\lambda}\right|\leq C_mv(\gamma)\norm{A(\frac{e_\lambda}{m_{\psi(\lambda)}})}_{\ell^{p}_{m}(\Lambda)},$$ which finishes the proof.
\end{proof}

We will apply Theorem \ref{compac_A_permutation} to the study of compactness of FIOs
$$
Tf(x)= \int_{\R^d}e^{2\pi \im \p(x,\eta)}\s(x,\eta)\hat{f}(\eta)d \eta
$$ whose phase is tame and with symbol $\sigma\in M^\infty_{1\otimes v_{s_0}}(\R^{2d}),$ $s_0> 2d.$ As usual, $\chi$ is the canonical transformation of the symbol $\Phi$ and $\chi':\Lambda\to \Lambda$ is its discrete version.

\begin{theorem}\label{Teo_FIO_comp_L2}{\rm Let $T$ be a FIO whose phase $\Phi$ is tame and $\sigma\in M^\infty_{1\otimes v_{s_0}}(\R^{2d}),$ $s_0 > 2d.$ The following conditions are equivalent:
\begin{itemize}
\item[(1)] $T:L^2(\R^d) \to L^2(\R^d)$ is a compact operator.
\item[(2)] $M(\s,\p): \ell^2(\Lambda) \to \ell^2(\Lambda)$  is compact.
\item[(3)] $\left(\langle T\pi(\lambda)g, \pi(\chi'(\lambda)+\mu)g\rangle\right)_\lambda\in c_0(\Lambda)$ for every $\mu \in \Lambda$.
\end{itemize}
}
\end{theorem}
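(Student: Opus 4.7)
The plan is to combine three ingredients already in place: Theorem \ref{th:operator_versus_matrix}(3) for the operator--matrix correspondence, Proposition \ref{prop:Gabor_matrix_in_class} to locate the Gabor matrix in $\mathcal{C}_{v_s,\chi'}(\Lambda)$, and Theorem \ref{compac_A_permutation} to characterize compactness of matrices in that class. Throughout I would take $p=2$ and the trivial weight $m\equiv 1$, so that $\ell^2_{m\circ\chi'}(\Lambda)=\ell^2(\Lambda)$ and $M^{2,2}_1(\R^d)=L^2(\R^d)$.

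For (1)$\Leftrightarrow$(2), I would first invoke Proposition \ref{prop:Gabor_matrix_in_class} with any choice $0\leq s<s_0-2d$ to deduce $M(\sigma,\Phi)\in\mathcal{C}_{v_s,\chi'}(\Lambda)$. Proposition \ref{A_cont_permuted}(1) then gives that $M(\sigma,\Phi):\ell^2(\Lambda)\to\ell^2(\Lambda)$ is bounded, and Theorem \ref{th:operator_versus_matrix}(1) transfers this to a bounded extension $T:L^2(\R^d)\to L^2(\R^d)$. Reflexivity of $L^2$ makes this extension automatically weak$^\ast$ continuous, so the hypothesis of Theorem \ref{th:operator_versus_matrix}(3) is verified; its conclusion is exactly (1)$\Leftrightarrow$(2).

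For (2)$\Leftrightarrow$(3) I would apply Theorem \ref{compac_A_permutation} to $A=M(\sigma,\Phi)$ with $\psi=\chi'$, $m\equiv 1$, and $p=2$. The theorem asserts that $M(\sigma,\Phi):\ell^2(\Lambda)\to\ell^2(\Lambda)$ is compact if and only if for every $\gamma\in\Lambda$ the sequence $a^\gamma=(a_{\chi'(\lambda)+\gamma,\lambda})_{\lambda\in\Lambda}$ belongs to $c_0(\Lambda)$. Recalling $a_{\mu,\lambda}=\langle T\pi(\lambda)g,\pi(\mu)g\rangle$, the sequence $a^\gamma$ is precisely $(\langle T\pi(\lambda)g,\pi(\chi'(\lambda)+\gamma)g\rangle)_\lambda$, which (relabeling $\gamma$ as $\mu$) is the sequence in (3).

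There is no real obstacle here: the argument is pure bookkeeping that glues together previously established results. The only small point requiring care is checking that the hypotheses of Theorem \ref{th:operator_versus_matrix}(3) are met, in particular the weak$^\ast$ continuity of $T$; this comes for free in the Hilbert-space setting via reflexivity, once boundedness has been extracted from the $\mathcal{C}_{v_s,\chi'}$ membership.
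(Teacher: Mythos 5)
Your proof is correct and follows essentially the same route as the paper: Theorem \ref{th:operator_versus_matrix}(3) for the operator--matrix transfer, then Proposition \ref{prop:Gabor_matrix_in_class} and Theorem \ref{compac_A_permutation} for the equivalence with condition (3). The only (harmless) difference is in establishing the preliminary $L^2$-boundedness needed for Theorem \ref{th:operator_versus_matrix}(3): the paper cites the external result \cite[Theorem 6.1]{Cordero_2010_Time} via the inclusion $M^\infty_{1\otimes v_{s_0}}(\R^{2d})\subset M^{\infty,1}(\R^{2d})$, whereas you derive it internally from Propositions \ref{prop:Gabor_matrix_in_class} and \ref{A_cont_permuted}(1), which is equally valid and slightly more self-contained.
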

\begin{proof}
 Since $M^\infty_{1\otimes v_{s_0}}(\R^{2d})\subset M^{\infty,1}(\R^{2d})$ we can apply \cite[Theorem 6.1]{Cordero_2010_Time} to obtain that $T:L^2(\R^d) \to L^2(\R^d)$ is a bounded operator.  From Theorem \ref{th:operator_versus_matrix} we get the equivalence of conditions (1) and (2). Now it suffices to apply Proposition \ref{prop:Gabor_matrix_in_class} and Theorem \ref{compac_A_permutation} to conclude.
\end{proof}

We observe that, for any positive and $v_s$-moderate weight $m,$
$$
\ell^p_{m \circ \chi}(\Lambda)= \ell^p_{m \circ \chi'}(\Lambda)
$$ with equivalent norms and that $m\circ\chi$ is $v_s$-moderate whenever $m$ is.

\begin{theorem}\label{Teo_FIO_comp}{\rm Let $T$ be a FIO whose phase $\Phi$ is tame and $\sigma\in M^\infty_{1\otimes v_{s_0}}(\R^{2d}),$ $s_0 > 2d.$ Then, for every $0\leq s < s_0-2d$, the following conditions are equivalent:
 \begin{itemize}
\item[(1)] $T:L^2(\R^d) \to L^2(\R^d)$ is a compact operator.
\item[(2)] $T:M^p_{m \circ \chi}(\R^d) \to M^p_m(\R^d)$ is a compact operator for some $1 \leq p < \infty$ and for some $v_s$-moderate weight $m.$
\item[(3)] $T:M^p_{m \circ \chi}(\R^d) \to M^p_m(\R^d)$ is a compact operator for every $1 \leq p < \infty$ and for every $v_s$-moderate weight $m.$
\end{itemize}
}
\end{theorem}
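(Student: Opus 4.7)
The idea is that everything has already been set up so that all three conditions reduce to the same $p$- and $m$-independent statement about the Gabor matrix.

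First I would verify the setup required to apply the matrix/operator dictionary. By Proposition \ref{prop:Gabor_matrix_in_class}, for every $s$ with $0\leq s<s_0-2d$ we have $M(\sigma,\Phi)\in \CC_{v_s,\chi'}$. For any $v_s$-moderate weight $m$, Proposition \ref{A_cont_permuted}(1) then gives that $M(\sigma,\Phi):\ell^p_{m\circ\chi'}(\Lambda)\to \ell^p_m(\Lambda)$ is bounded and weak$^\ast$ continuous. Combining this with Theorem \ref{th:operator_versus_matrix}(1)--(2) (and the remark just before the theorem statement that $\ell^p_{m\circ\chi}=\ell^p_{m\circ\chi'}$ with equivalent norms, and that $m\circ\chi$ is $v_s$-moderate whenever $m$ is), it follows that $T$ extends to a weak$^\ast$ continuous bounded operator $T:M^p_{m\circ\chi}(\R^d)\to M^p_m(\R^d)$ for every $1\leq p\leq\infty$ and every $v_s$-moderate $m$. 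Thus in (2) and (3) the operator $T$ is at least well defined, and Theorem \ref{th:operator_versus_matrix}(3) applies.

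Next I would chain the equivalences. Fix $1\leq p<\infty$ and a $v_s$-moderate weight $m$. By Theorem \ref{th:operator_versus_matrix}(3),
\[
T:M^p_{m\circ\chi}(\R^d)\to M^p_m(\R^d)\ \text{compact}\iff M(\sigma,\Phi):\ell^p_{m\circ\chi'}(\Lambda)\to \ell^p_m(\Lambda)\ \text{compact}.
\]
Because $M(\sigma,\Phi)\in \CC_{v_s,\chi'}$, Theorem \ref{compac_A_permutation} characterizes the right-hand side by the condition
\[
\bigl(\langle T\pi(\lambda)g,\pi(\chi'(\lambda)+\mu)g\rangle\bigr)_{\lambda\in\Lambda}\in c_0(\Lambda)\qquad \forall\,\mu\in\Lambda. \tag{$\ast$}
\]
Crucially, condition $(\ast)$ involves neither $p$ nor $m$. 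Hence (2) $\Leftrightarrow$ $(\ast)$ $\Leftrightarrow$ (3) at once, where in the implication (2)$\Rightarrow(\ast)$ we use the particular $p,m$ given, and in $(\ast)\Rightarrow(3)$ we use that the conclusion of Theorem \ref{compac_A_permutation} holds uniformly in $p$ and $m$ (provided $m$ is $v_s$-moderate, which lets us invoke Proposition \ref{A_cont_permuted}).

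Finally, for the bridge to (1), I would invoke Theorem \ref{Teo_FIO_comp_L2}, which says exactly that (1) is equivalent to $(\ast)$. This closes the loop: (1) $\Leftrightarrow$ $(\ast)$ $\Leftrightarrow$ (2) $\Leftrightarrow$ (3).

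The main obstacle is not really an obstacle but a bookkeeping point: one must check that the hypotheses of Theorem \ref{th:operator_versus_matrix}(3) — weak$^\ast$ continuity of $T$ between the relevant modulation spaces — are in force before translating compactness into the matrix picture. Once the membership $M(\sigma,\Phi)\in\CC_{v_s,\chi'}$ is established and Proposition \ref{A_cont_permuted} is applied, this is automatic, and the rest of the proof is a direct concatenation of the previously proved results.
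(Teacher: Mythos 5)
Your proof is correct and follows essentially the same route as the paper: reduce everything via Theorem \ref{th:operator_versus_matrix} and Theorem \ref{compac_A_permutation} to the $p$- and $m$-independent condition $\left(\langle T\pi(\lambda)g, \pi(\chi'(\lambda)+\mu)g\rangle\right)_\lambda\in c_0(\Lambda)$ for all $\mu$. The only cosmetic difference is that you route condition (1) explicitly through Theorem \ref{Teo_FIO_comp_L2}, while the paper treats $L^2=M^2$ as the special case $p=2$, $m\equiv 1$; the substance is identical.
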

\begin{proof} From \cite[Corollary 5.5]{Cordero_2012_approximation} and Propositions \ref{prop:Gabor_matrix_in_class} and \ref{A_cont_permuted} we have that
$$
T:M^p_{m \circ \chi}(\R^d) \to M^p_m(\R^d)\ \mbox{and}\  M(\sigma,\Phi):\ell^p_{m\circ\chi'}(\Lambda)\to \ell^p_m(\Lambda)
$$ are bounded operators for every $1 \leq p < \infty$ and for every $v_s$-moderate weight $m.$ It suffices to show $(2)\Rightarrow (3).$ According to Theorems \ref{th:operator_versus_matrix} and \ref{compac_A_permutation}, condition (2) is equivalent to the fact that
$$\left(\langle T\pi(\lambda)g, \pi(\chi'(\lambda)+\mu)g\rangle\right)_\lambda\in c_0(\Lambda)$$ for every $\mu \in \Lambda$ and this condition does not depend on $p$ nor on $m.$
\end{proof}

\par\medskip
We next discuss the case $p = \infty.$

\begin{theorem}{\rm Let $T$ be a FIO whose phase $\Phi$ is tame and $\sigma\in M^\infty_{1\otimes v_{s_0}}(\R^{2d})$ and let $0\leq s < s_0-2d$ and $m$ a $v_s$-moderate weight. Then
\begin{itemize}
 \item[(1)] $T$ admits a unique extension as a bounded operator $$T:M^\infty_{m \circ \chi}(\R^d) \to M^\infty_m(\R^d)$$ which is also weak$^\ast$-continuous.
\item[(2)] $T:M^\infty_{m \circ \chi}(\R^d) \to M^\infty_m(\R^d)$ is compact if and only if $$\left(\langle T\pi(\lambda)g, \pi(\chi'(\lambda)+\mu)g\rangle\right)_\lambda\in c_0(\Lambda)$$ for every $\mu \in \Lambda.$
\end{itemize}
}
\end{theorem}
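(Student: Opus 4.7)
The plan is to follow the same two-step strategy used in Theorems \ref{Teo_FIO_comp_L2} and \ref{Teo_FIO_comp}, namely: first transfer the problem to the Gabor matrix $M(\sigma,\Phi)$ via Theorem \ref{th:operator_versus_matrix}, and then read off boundedness/compactness from the structural results on the class $\mathcal{C}_{v_s,\chi'}$ already established (Propositions \ref{prop:Gabor_matrix_in_class} and \ref{A_cont_permuted}, together with Theorem \ref{compac_A_permutation}). The only genuinely new ingredient compared to the $p<\infty$ case is that Schwartz functions are not norm-dense in $M^\infty$, which forces us to work with weak$^\ast$-continuity throughout; this is precisely why Theorem \ref{th:operator_versus_matrix}(2) and (3) and Proposition \ref{A_cont_permuted}(1) were already stated in a form that covers $p=\infty$.

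For part (1), I start from Proposition \ref{prop:Gabor_matrix_in_class}, which gives $M(\sigma,\Phi)\in\mathcal{C}_{v_s,\chi'}$ for every $0\leq s<s_0-2d$. Applying Proposition \ref{A_cont_permuted}(1) with $p=\infty$ and the weight $m$ yields a bounded, weak$^\ast$-continuous operator
\[
M(\sigma,\Phi):\ell^\infty_{m\circ\chi'}(\Lambda)\longrightarrow\ell^\infty_m(\Lambda).
\]
Using the observation recorded just before Theorem \ref{Teo_FIO_comp} that $\ell^\infty_{m\circ\chi}=\ell^\infty_{m\circ\chi'}$ with equivalent norms, Theorem \ref{th:operator_versus_matrix}(2) (applied with $p=q=\infty$, $m_1=m\circ\chi$, $m_2=m$) then lifts this to a weak$^\ast$-continuous bounded extension $T:M^\infty_{m\circ\chi}(\R^d)\to M^\infty_m(\R^d)$. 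Uniqueness of the extension is automatic once weak$^\ast$-continuity is required, because $\mathcal{S}(\R^d)$ is weak$^\ast$-dense in $M^\infty_{m\circ\chi}(\R^d)$ (it is the norm-closure description of $M^{0}_{m\circ\chi}$, and a standard argument via the Gabor expansion and the bounded sequence $\sum_{|\lambda|\leq R}\langle f,\pi(\lambda)h\rangle\pi(\lambda)g$ shows weak$^\ast$-density in $M^\infty$).

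For part (2), since $T:M^\infty_{m\circ\chi}(\R^d)\to M^\infty_m(\R^d)$ is now weak$^\ast$-continuous by (1), Theorem \ref{th:operator_versus_matrix}(3) is applicable and gives the equivalence of its compactness with the compactness of
\[
M(\sigma,\Phi):\ell^\infty_{m\circ\chi'}(\Lambda)\longrightarrow\ell^\infty_m(\Lambda).
\]
Since $M(\sigma,\Phi)\in\mathcal{C}_{v_s,\chi'}$ by Proposition \ref{prop:Gabor_matrix_in_class}, Theorem \ref{compac_A_permutation} applied with $p=\infty$, $\psi=\chi'$ and $a_{\gamma,\gamma'}=\langle T\pi(\gamma')g,\pi(\gamma)g\rangle$ characterizes this compactness by the condition $a^\mu=\left(a_{\chi'(\lambda)+\mu,\lambda}\right)_\lambda\in c_0(\Lambda)$ for every $\mu\in\Lambda$, which is exactly the stated criterion.

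The step I expect to require the most care is verifying the weak$^\ast$-density needed to make the uniqueness in (1) unambiguous, and checking that Theorem \ref{th:operator_versus_matrix}(3), whose statement involves a weak$^\ast$-continuous $T$ as a hypothesis, can be invoked here; both are already supplied by the infrastructure of Section \ref{sec:matrix_representation} (equations (\ref{op_matrix-1}) and (\ref{op_matrix-2}) plus the weak$^\ast$-continuity of $C_g,C_h,D_g,D_h$ for $1\leq p,q\leq\infty$), so no new analytical difficulty should arise beyond correctly quoting these statements.
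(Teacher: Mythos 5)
Your proposal is correct and follows essentially the same route as the paper: the paper's part (1) is precisely the composition $D_g\circ M(\sigma,\Phi)\circ C_g$ with the weak$^\ast$-continuity of each factor (which is the content of Theorem \ref{th:operator_versus_matrix}(2) combined with Proposition \ref{A_cont_permuted}), and uniqueness via weak$^\ast$-density of $\mathcal{S}(\R^d)$; part (2) is likewise obtained from Theorem \ref{th:operator_versus_matrix}(3) and Theorem \ref{compac_A_permutation}. No substantive difference.
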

\begin{proof}
 (1) In fact, we consider the composition
$$
T:M^\infty_{m \circ \chi}(\R^d) \xrightarrow{C_g}\ell^\infty_{m \circ \chi}(\Lambda)\xrightarrow{M(\s,\Phi)}\ell^\infty_{m}(\Lambda)\xrightarrow{S^\ast}M^\infty_{m}(\R^d),
$$ where $S = C_g:M^1_{1/m}(\R^d)\to \ell^1_{1/m}(\Lambda).$ We observe that all the involved maps are weak$^\ast$-continuous. Since ${\mathcal S}({\mathbb R}^d)$ is weak$^\ast$-dense in $M^\infty_{m \circ \chi}(\R^d)$ the extension is unique.
\par\medskip
(2) By theorem \ref{th:operator_versus_matrix}, $T$ is compact if and only if $M(\s,\Phi):\ell^\infty_{m \circ \chi}\to\ell^\infty_{m}(\Lambda)$ is. Now it suffices to apply Theorem \ref{compac_A_permutation}.
\end{proof}

For the proof of the next result we recall that the canonical transformation $(x,\xi) = \chi(y,\eta)$ is defined through the system
$$
\left\{ \begin{matrix}
y= \nabla_\eta \p(x,\eta)\\
\xi = \nabla_x \p(x,\eta)
\end{matrix}.\right.
$$

\begin{theorem}\label{comp_si_s0}{\rm Let $T$ be a FIO whose phase $\Phi$ is tame and $\sigma\in M^\infty_{1\otimes v_{s_0}}(\R^{2d})$ and let $0\leq s < s_0-2d.$ If $\s\in M^0(\R^{2d})$ then $T:M^p_{m \circ \chi}(\R^{d}) \to M^p_{m}(\R^{d})$ is a compact operator for every $1\leq p \leq\infty$ and for every $v_s$-moderate weight $m.$
 }
\end{theorem}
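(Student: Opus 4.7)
The plan is to reduce compactness of $T$ to the vanishing at infinity of the ``generalized diagonal'' of the Gabor matrix, and then to extract this vanishing from the hypothesis $\sigma\in M^0(\R^{2d})$ via the integral-form pointwise bound that underlies Proposition \ref{prop:Gabor_matrix_in_class}.

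\smallskip

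First I would reduce the problem to a single statement about the Gabor matrix. By Theorem \ref{Teo_FIO_comp} (for $1\leq p<\infty$) and the preceding theorem (for $p=\infty$), the assertion that $T:M^p_{m\circ\chi}\to M^p_m$ is compact for all $1\leq p\leq\infty$ and all $v_s$-moderate weights $m$ is equivalent to the single condition
$$
\bigl(\langle T\pi(\lambda)g,\pi(\chi'(\lambda)+\mu_0)g\rangle\bigr)_{\lambda\in\Lambda}\in c_0(\Lambda)\qquad\forall\,\mu_0\in\Lambda,\tag{$\star$}
$$
a condition that does not involve $p$ or $m$ at all. So the only thing to prove is that the hypothesis $\sigma\in M^0(\R^{2d})$ forces $(\star)$.

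\smallskip

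Next I would revisit the Cordero--Nicola--Rodino / Cordero 2012 almost-diagonalization machinery, not just in the ``final'' form $|\langle T\pi(\lambda)g,\pi(\mu)g\rangle|\leq C\langle\chi(\lambda)-\mu\rangle^{-s_0}$ used in Proposition \ref{prop:Gabor_matrix_in_class}, but in the intermediate pointwise form from which this estimate is obtained. Schematically, that intermediate bound is of the type
$$
|\langle T\pi(\lambda)g,\pi(\mu)g\rangle|\leq C\int_{\R^{2d}}|V_\Psi\sigma(\lambda,\zeta)|\,H(\mu-\chi(\lambda)-\zeta)\,d\zeta,
$$
with $\Psi\in\mathcal S(\R^{2d})$ and $H\in\mathcal S(\R^{2d})$ (the $\langle\cdot\rangle^{-s_0}$ bound of Proposition \ref{prop:Gabor_matrix_in_class} is recovered by integrating in $\zeta$ against the weight $1/v_{s_0}(\zeta)$). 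The key structural feature for us is that the first variable of $V_\Psi\sigma$ is evaluated at $\lambda$, so large $\lambda$ forces the symbol's STFT to be evaluated far from the origin.

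\smallskip

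With this in hand, I would specialise to $\mu=\chi'(\lambda)+\mu_0$, so that $\mu-\chi(\lambda)=\mu_0-r_\lambda$ stays in a bounded set as $\lambda$ varies (because $r_\lambda\in Q$ compact). The pointwise bound becomes
$$
|\langle T\pi(\lambda)g,\pi(\chi'(\lambda)+\mu_0)g\rangle|\leq C\int_{\R^{2d}}|V_\Psi\sigma(\lambda,\zeta)|\,H(\mu_0-r_\lambda-\zeta)\,d\zeta.
$$
Two facts now come into play:
\begin{itemize}
\item From $\sigma\in M^\infty_{1\otimes v_{s_0}}$ one has $|V_\Psi\sigma(\lambda,\zeta)|\leq C'/v_{s_0}(\zeta)$, and since $H$ is Schwartz and $\{\mu_0-r_\lambda\}_\lambda$ is bounded, there is a Schwartz $\widetilde H(\zeta)$ with $H(\mu_0-r_\lambda-\zeta)\leq\widetilde H(\zeta)$ for all $\lambda$; the product $\widetilde H(\zeta)/v_{s_0}(\zeta)$ is integrable because $s_0>2d$.
\item From $\sigma\in M^0(\R^{2d})$ one has $V_\Psi\sigma(u,\zeta)\to 0$ as $|(u,\zeta)|\to\infty$; in particular, for each fixed $\zeta$, $V_\Psi\sigma(\lambda,\zeta)\to 0$ as $|\lambda|\to\infty$.
\end{itemize}
The dominated convergence theorem then gives $(\star)$, completing the proof.

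\smallskip

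The main obstacle is the passage in Step 2: the estimate actually quoted in Proposition \ref{prop:Gabor_matrix_in_class} is a single scalar bound that collapses the STFT of $\sigma$ into a constant, and it does not by itself detect the $M^0$ condition (since $\langle\chi(\lambda)-\mu\rangle^{-s_0}$ does not vanish when $\mu=\chi'(\lambda)+\mu_0$, because $\chi(\lambda)-\mu=r_\lambda-\mu_0$ is bounded). One must therefore go back into the proof of that estimate in \cite{Cordero_2012_approximation} and extract the integral-form bound in which $V_\Psi\sigma$ still appears, and check that the first STFT variable is indeed (essentially) $\lambda$, so that large $\lambda$ forces $V_\Psi\sigma$ to be sampled far from the origin. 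Once this structural point is in place the rest is a clean dominated-convergence argument.
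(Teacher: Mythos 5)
Your reduction to the single matrix condition $(\star)$ is exactly how the paper begins, and your instinct that one must go behind the scalar bound $\langle\chi(\lambda)-\mu\rangle^{-s_0}$ to the STFT-of-$\sigma$ representation of the Gabor matrix is also the right one. But the key step, as you describe it, does not match the actual structure of that representation, and the mismatch is not cosmetic. The identity underlying the almost diagonalization (equation (39) of Cordero--Nicola--Rodino, quoted as \eqref{iguMV} in the paper) reads
$$
|\langle T\pi(\lambda)g,\pi(\mu)g\rangle| \;=\; |V_{\Psi_{(\mu_1,\lambda_2)}}\sigma(z_{\lambda,\mu})|,
$$
where the \emph{time} variable of the STFT is $(\mu_1,\lambda_2)$ --- a mixture of components of $\mu$ and $\lambda$, not $\lambda$ itself --- the \emph{frequency} variable is $(\mu_2-\nabla_x\Phi(\mu_1,\lambda_2),\,\lambda_1-\nabla_\eta\Phi(\mu_1,\lambda_2))$, and, crucially, the window $\Psi_{(\mu_1,\lambda_2)}$ itself varies with $(\mu_1,\lambda_2)$. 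Two consequences. First, your dominated-convergence argument rests on the premise that large $\lambda$ forces the first STFT variable to infinity while the second is an integration parameter $\zeta$; in reality one must show that the single evaluation point $z_{\lambda,\mu}$ escapes to infinity, and this requires a case analysis: if $(\mu_1,\lambda_2)$ stays bounded, then $\nabla_x\Phi(\mu_1,\lambda_2)$ and $\nabla_\eta\Phi(\mu_1,\lambda_2)$ are bounded and it is the frequency component that must blow up. The paper carries this out explicitly. Second, and more seriously, because the window depends on $(\mu_1,\lambda_2)$ you cannot invoke a single $C_0$ function $V_\Psi\sigma$: the paper needs the additional facts that $D=\{\Psi_{(\mu_1,\lambda_2)}\}$ is relatively compact in ${\mathcal S}(\R^{2d})$ and that $\Psi\mapsto V_\Psi\sigma$ is continuous from ${\mathcal S}(\R^{2d})$ into $C_0(\R^{2d})$ when $\sigma\in M^0(\R^{2d})$, so that the vanishing at infinity is \emph{uniform} over the window family. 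Neither ingredient appears in your proposal, and without them the argument does not close.

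A further structural difference: the paper does not restrict attention to the generalized diagonals $\mu=\chi'(\lambda)+\mu_0$ at all; it proves the stronger statement that the full Gabor matrix $M(\sigma,\Phi)_{\mu,\lambda}$ tends to zero as $|(\lambda,\mu)|\to\infty$, which implies $(\star)$ via Theorem \ref{compac_A_permutation}. Your idea of exploiting that $\mu-\chi(\lambda)$ is bounded on a generalized diagonal is sound in spirit (there the frequency component of $z_{\lambda,\mu}$ stays bounded and only the time component escapes, so ``essentially $\lambda$'' can be justified a posteriori), but turning it into a proof still requires the two ingredients above, so as written the proposal has a genuine gap precisely at the step you yourself flag as the main obstacle.
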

\begin{proof} It suffices to show that $M(\s,\Phi)_{\mu,\lambda}$ goes to zero as $|(\lambda,\mu)|$ goes to infinity. To this end we first recall the relation between the Gabor matrix of $T$ and the STFT of $\sigma.$ We denote $\l=(\l_1,\l_2), \mu=(\mu_1,\mu_2) \in \R^{2d}.$ From \cite[(39)]{Cordero_2010_Time} we have
\begin{equation}\label{iguMV}
| \langle T \pi(\l)g, \pi(\mu)g \rangle| =  |V_{\Psi_{\mu_1,\l_2}}\s(z_{\l,\mu})|
\end{equation}
where $$z_{(\l_1,\l_2),(\mu_1,\mu_2)}= ( \mu_1, \l_2, (\mu_2- \nabla_x \p(\mu_1, \l_2)),(\l_1- \nabla_{\eta} \p(\mu_1, \l_2))),$$
$$\Psi_{(\mu_1,\l_2)}(w)= e^{2 \pi \im \p_{2,(\mu_1,\l_2)}(w)}\overline{g} \otimes \hat{g}$$ and $\p_{2,(\mu_1,\l_2)}$ denotes the reminder of order two of the Taylor series of $\p$, that is,
\begin{equation*}
\p_{2,(\mu_1,\l_2)}(w)= 2 \sum_{|\a|=2}\int_0^1 (1-t)\partial^{\a}\p((\mu_1,\l_2)+tw)dt \frac{w^{\a}}{\a!}
\end{equation*}
with $(\mu_1,\l_2), w \in \R^{2d}$. By \cite[6.1]{Cordero_2010_Time} we obtain that
$$D = \{ \Psi_{(\mu_1,\l_2)}: (\mu_1,\l_2)\in \Z^{2d}\}$$ is a relatively compact set in $S(\R^{2d})$. Since $\s \in M^0(\R^{2d}),$
$$
S(\R^{2d}) \to C_0(\R^{2d}),\ \  \Psi \mapsto V_{\Psi}\s
$$ is a continuous map, hence
$$\widetilde{D}= \{ V_{\Psi_{(\mu_1,\l_2)}}\s: (\mu_1,\l_2)\in \Z^{2d}\} = \{V_{\Psi}\s : \Psi \in D\}$$ is a relatively compact set in $C_0(\R^{2d}).$ We conclude that $|V_{\Psi_{\mu_1,\l_2}}\s(z_{\l,\mu})|$ goes uniformly to zero as $|z_{\l,\mu}|$ goes to infinity.\\

Finally, we check that $M(\s,\p)_{\l,\mu}$ goes to zero as $|(\l,\mu)|$ goes to infinity. We can distinguish two cases:
\begin{itemize}
\item $\mu_1$ or $\l_2$ goes to infinity. Then also $|z_{\l,\mu}|$ goes to infinity.
\item Neither $\mu_1$ nor $\l_2$ goes to infinity. We can assume that there exist $C>0$ such that $|(\mu_1, \l_2)|\leq C$, from where it follows that $\nabla_{x} \p(\mu_1, \l_2)$ and $\nabla_{\eta} \p(\mu_1, \l_2)$ are bounded. As $|(\l,\mu)|$ goes to infinity then $\mu_2$ or $\l_1$ goes to infinity. From the fact that $\nabla_{x} \p(\mu_1, \l_2)$ and $\nabla_{\eta} \p(\mu_1, \l_2)$ are bounded, we conclude that $|z_{\l,\mu}|$ goes to infinity.
\end{itemize}
From \eqref{iguMV} we deduce that the Gabor matrix $M(\s,\p)_{\l,\mu}$ goes to $0$ as $|(\l,\mu)|$ goes to infinity and the proof is complete.
\end{proof}

\par\medskip
We now prove that the converse is true in the particular case of quadratic phases.
\begin{definition}{\rm
The map $\Phi: \R^{2d} \to \R$ is said to be a quadratic phase if
$$
\Phi(x, \eta)= \frac{1}{2} Ax \cdot x + Bx \cdot \eta + \frac{1}{2} C \eta \cdot \eta + \eta_0 \cdot x - x_0 \cdot \eta
$$ where $x_0, \eta_0 \in \R^d$, $A, B, C$ are symmetric real matrices and $B$ is non degenerate.
}
\end{definition}

\begin{theorem}\label{quadratic}{\rm Let $T$ be a FIO with quadratic phase $\p$ and $\sigma\in M^\infty_{1\otimes v_{s_0}}(\R^{2d})$ and let $0\leq s < s_0-2d.$ Then the following statements are equivalent:
\begin{itemize}
\item[(1)] $\s \in M^0(\R^{2d})$.
\item[(2)] $T:M^p_{m \circ \chi}(\R^{d}) \to M^p_{m}(\R^{d})$ is a compact operator for every $1\leq p \leq \infty$ and for every $v_s$-moderate weight $m.$
\end{itemize}
}
\end{theorem}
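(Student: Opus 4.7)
The plan is to prove the non-trivial implication $(2)\Rightarrow(1)$; the converse is already Theorem \ref{comp_si_s0}. I will show that $V_\Psi\sigma\in C_0(\R^{4d})$ for a suitable Schwartz window $\Psi$, which is equivalent to $\sigma\in M^0(\R^{2d})$.

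First I would exploit the quadratic nature of $\Phi$ to simplify formula \eqref{iguMV}. Since $\partial^\a\Phi$ is constant whenever $|\a|=2$, the remainder $\Phi_{2,(\mu_1,\lambda_2)}$ equals the purely quadratic part of $\Phi$ and does not depend on the base point $(\mu_1,\lambda_2)$. Consequently \eqref{iguMV} reduces to
$$|\langle T\pi(\lambda)g,\pi(\mu)g\rangle|=|V_\Psi\s(z_{\lambda,\mu})|$$
for the single Schwartz function $\Psi=e^{2\pi\im\Phi_2}\,\bar g\otimes\hat g$. Next I would check that the map $(\lambda,\mu)\mapsto z_{\lambda,\mu}$ is an affine bijection of $\R^{4d}$ onto itself: using $\nabla_x\Phi=Ax+B\eta+\eta_0$ and $\nabla_\eta\Phi=Bx+C\eta-x_0$ one writes $z_{\lambda,\mu}$ in four $d$-dimensional blocks, and the non-degeneracy of $B$ makes the linear part invertible, so $\Lambda\times\Lambda$ is carried onto an affine lattice $\Lambda^\sharp\subset\R^{4d}$.

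Now I would translate the hypothesis. By Theorems \ref{Teo_FIO_comp} and \ref{Teo_FIO_comp_L2}, condition $(2)$ yields
$$\bigl(\langle T\pi(\lambda)g,\pi(\chi'(\lambda)+\mu)g\rangle\bigr)_\lambda\in c_0(\Lambda)\qquad\forall\mu\in\Lambda.$$
Combined with the decay $|V_\Psi\sigma(z_{\lambda,\mu})|\leq C\langle\chi(\lambda)-\mu\rangle^{-s_0}$ coming from Proposition \ref{prop:Gabor_matrix_in_class}, I would show that for every $\varepsilon>0$ only finitely many pairs $(\lambda,\mu)\in\Lambda\times\Lambda$ satisfy $|V_\Psi\sigma(z_{\lambda,\mu})|\geq\varepsilon$: pairs with $|\chi(\lambda)-\mu|$ large are discarded by the decay estimate, which in turn restricts the admissible $\nu=\mu-\chi'(\lambda)$ to a finite subset $F\subset\Lambda$ (since $\chi(\lambda)-\chi'(\lambda)\in Q$ is bounded), and for each $\nu\in F$ the $c_0$ condition forces the bad $\lambda$ to lie in a finite set. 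Via the bijection this reads $V_\Psi\sigma|_{\Lambda^\sharp}\in c_0(\Lambda^\sharp)$.

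The last step upgrades lattice decay to $C_0$-decay on $\R^{4d}$. Since the property $\sigma\in M^0$ is intrinsic and does not depend on the Parseval Gabor frame employed in the previous arguments, I am free to pick $g$ a Gaussian and $\a,\b>0$ arbitrarily small. Then $\Psi$ is a chirped Gaussian, i.e., the image of a Gaussian under the metaplectic operator associated with $\Phi_2$; shrinking $\a,\b$ makes the symplectic push-forward of $\Lambda^\sharp$ arbitrarily dense, so that $(\Psi,\Lambda^\sharp)$ becomes a Gabor frame for $L^2(\R^{2d})$ with Schwartz dual window. Since in that setting $\sigma\in M^0(\R^{2d})$ is characterized by having its Gabor coefficients lie in $c_0(\Lambda^\sharp)$, the conclusion follows. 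The main obstacle is precisely this upgrade: a naive argument based on uniform continuity of $V_\Psi\sigma$ does not work (a Lipschitz function on $\R$ vanishing on $\Z$ need not be in $C_0$), so one genuinely needs Gabor-frame input, made available by the flexibility in the choice of the analyzing lattice and the metaplectic interpretation of the chirp factor in $\Psi$.
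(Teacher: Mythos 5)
Your first three steps coincide with the paper's: the frozen window $\Psi=\Psi_{(0,0)}$ coming from the constancy of the second derivatives of $\Phi$, the invertibility of the affine map $(\lambda,\mu)\mapsto z_{\lambda,\mu}$ (the paper uses it in the form $(a,b,c,d)=z_{f,b,a,e}$ together with a two-case analysis showing $|(f,b,a,e)|\to\infty$), and the derivation of $V_\Psi\sigma|_{\Lambda^\sharp}\in c_0$ from $M(\sigma,\Phi)\in\CC_{v_s,\chi'}$ plus the compactness. You diverge at the crucial last step, the upgrade from decay on the sampling set to $C_0(\R^{4d})$. The paper never leaves the fixed Parseval frame: it writes $\pi(u)g=\sum_\nu\langle\pi(u)g,\pi(\nu)g\rangle\pi(\nu)g$ for $u$ in a fundamental domain, uses the summability of $\alpha(\nu)=\sup_{u\in K}|V_gg(\nu+u)|$ in an $\varepsilon/3$ argument to show that the \emph{continuous} Gabor matrix $G(z,w)=\langle T\pi(z)g,\pi(w)g\rangle$ lies in $C_0(\R^{4d})$, and only then transports this through the affine bijection. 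This is elementary and self-contained. Your route instead turns $\{\pi(z_{\lambda,\mu})\Psi\}$ into a Gabor frame for $L^2(\R^{2d})$ and invokes the $c_0$-characterization of $M^0$; the endgame is conceptually clean, but it imports genuinely heavier machinery -- a density theorem for Gabor frames with $M^1$ windows over general (non-separable, affine) lattices of $\R^{4d}$, plus the fact that such frames admit dual windows in $M^1$ so that $c_0$ coefficients characterize $M^0$. Both facts are true, but neither is in the paper's toolkit and both would need precise citations.

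Two concrete caveats if you pursue your version. First, taking $g$ Gaussian is inconsistent with the standing hypotheses of Section 3.1: the estimate (\ref{eq:matrix-decay-general}) for symbols that are merely in $M^\infty_{1\otimes v_{s_0}}$ is stated only for \emph{Parseval} frames, and a Gaussian never generates a tight Gabor frame on a rectangular lattice (its $V_gg$ has no zeros, so Wexler--Raz biorthogonality fails for tightness). You must therefore keep a Schwartz Parseval window, in which case $\Psi$ is a general nonzero Schwartz function rather than a chirped Gaussian, the metaplectic interpretation disappears, and you are forced onto the general $M^1$-window density theorem anyway. Second, your diagnosis that uniform continuity alone cannot bridge lattice decay and $C_0$ decay is correct, but the remedy need not be a second Gabor frame for the symbol space: the paper's expansion of $\pi(u)g$ over the fundamental domain is exactly the frame input you are asking for, applied at the level of the windows rather than of $\sigma$, and it keeps the whole proof inside the machinery already developed.
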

\begin{proof} We need to check that $(2)\Rightarrow (1).$ We use the same notation as in the proof of Theorem \ref{comp_si_s0}. Since the phase $\Phi$ is quadratic then all its second partial derivatives are constant. Hence
$$
\p_{2,(0,0)}(w)= \p_{2,(\mu_1,\l_2)}(w)\ \mbox{and}\ \Psi_{(\mu_1,\l_2)}(w)=\Psi_{(0,0)}(w)=\Psi(w)
$$ for every $(\mu_1,\l_2)\in \R^{2d}.$ Consequently
\begin{equation}\label{igualdad00}
| \langle T \pi(\l)g, \pi(\mu)g \rangle| =  |V_{\Psi_{(0,0)}}\s(z_{\l,\mu})|.
\end{equation}
We now proceed in several steps.
\par\medskip
We first prove that
\begin{equation}\label{eq:gaborgoestocero}
 \left(\langle T \pi(\l)g, \pi(\mu)g \rangle\right)_{\l,\mu \in \Lambda}\in c_0(\Lambda\times \Lambda).
\end{equation}
As $M(\s,\p)\in \CC_{v_s, \psi}$ we have
$$
\sum_{\gamma\in\Lambda}v_s(\gamma)\cdot\sup_{\l \in \Lambda}|M(\s,\p)_{\chi'(\l) + \gamma, \l}| < \infty.
$$ In particular
\begin{equation}\label{eq:(1)}
\lim_{|\gamma| \to \infty}\sup_{\l \in \Lambda}|M(\s,\p)_{\chi'(\l) + \gamma, \l}| = 0.
\end{equation}
Since $T$ is a compact operator we can apply Theorems \ref{Teo_FIO_comp_L2} and \ref{Teo_FIO_comp} to get
\begin{equation}\label{eq:(2)}
 \left(M(\s,\p)_{\chi'(\l) + \gamma, \l}\right)_{\l}\in c_0(\Lambda)
\end{equation}
for every $\gamma \in \Lambda.$ Statement (\ref{eq:gaborgoestocero}) is a consequence of conditions (\ref{eq:(1)}) and (\ref{eq:(2)}).
\par\medskip
Secondly, we check that $G(z,w) = \langle T \pi(z)g, \pi(w)g \rangle$ goes to zero as $|(z,w)|$ goes to infinity on ${\mathbb R}^{4d}.$ We have
\begin{equation}\label{rep_frame_T}
\pi(u)g= \sum_{\nu \in \Lambda} \langle \pi(u)g,\pi(\nu)g \rangle \pi(\nu)g.
\end{equation}
As $g \in S(\R^d) \subseteq M^1(\R^d)$, for every relatively compact subset $K \subset \R^{2d}$ there is $B>0$ such that
$$
\sum_{\nu \in \Lambda} \sup_{u\in K}|V_g g(\nu+u)|v_s(\nu)\leq B \|g\|_{M^1(\R^d)}$$ (see \cite[12.2.1]{Grochenig_2001_Foundations}). In particular, we take $K$ a symmetric and relatively compact fundamental domain of $\Lambda$ and define
$$
\alpha(\nu)= \sup_{u\in K}|V_g g(\nu+u)|= \sup_{u\in K}|\langle \pi(-u)g,\pi(\nu)g \rangle|.$$
Then $\alpha\in \ell^1(\Lambda)$.
Given $z,w \in \R^{2d}$ we can decompose $z=\mu + u$ and $w= \lambda + u'$, with $\mu, \lambda \in \Lambda$ and $u, u' \in K.$ From (\ref{rep_frame_T}) we obtain
$$
\begin{array}{*2{>{\displaystyle}l}}
|\langle T\pi(z)g,\pi(w)g \rangle| & = |\langle T\pi(\mu)\pi(u)g,\pi(\lambda)\pi(u')g \rangle| \\ & \\ & \leq \sum_{\nu, \nu'\in \Lambda} |\langle T\pi(\mu+\nu)g,\pi(\lambda+\nu')g \rangle| \alpha(\nu)\alpha(\nu').
\end{array}
$$ Let $\e > 0$ be given, take $A= \sup_{\lambda, \mu \in \Lambda} |\langle T\pi(\lambda)g,\pi(\mu)g \rangle|$, and find $M >0$ such that
$$\sum _{|\nu| > M} \alpha(\nu)< \frac{\e}{3 A\|\a\|_{\ell^1}}.$$
For every $\mu,\lambda \in \Lambda$ we have that
$$
\sum _{|\nu|\geq M, \nu' \in \Lambda} |\langle T\pi(\mu+\nu)g,\pi(\lambda+\nu')g \rangle| \alpha(\nu)\alpha(\nu')$$ is less than or equal to
$$A \sum _{|\nu|\geq M}\alpha(\nu)\sum _{\nu' \in \Lambda}\alpha(\nu') \leq \frac{\e}{3}
$$
and also
$$
\sum _{\nu\in\Lambda, |\nu'|\geq M} |\langle T\pi(\mu+\nu)g,\pi(\lambda+\nu')g \rangle| \alpha(\nu)\alpha(\nu')  \leq \frac{\e}{3}.
$$ Finally, an application of (\ref{eq:gaborgoestocero}) gives
$$
\begin{array}{*1{>{\displaystyle}l}}
|\langle T\pi(z)g,\pi(w)g \rangle|\leq \\  \\
 \leq \frac{2\e}{3} + \sum _{|\nu|, |\nu'|< M} |\langle T\pi(\mu+\nu)g,\pi(\lambda+\nu')g \rangle| \alpha(\nu)\alpha(\nu')\leq\\ \\ \leq \e
\end{array}
$$ for $|z|+|w|$ large enough. The proof that $|\langle T\pi(z)g,\pi(w)g \rangle| \in C_0(\R^{4d})$ is complete.
\par\medskip
We can now finish the proof that $\s\in M^0(\R^{2d}).$ We recall that
\begin{equation}
| \langle T \pi(\l)g, \pi(\mu)g \rangle| =  |V_{\Psi}\s(z_{\l,\mu})|
\end{equation}
for every $\l,\mu \in \R^2$ and consider $V_{\Psi}\s(a,b,c,d)$ with $(a,b,c,d)\in \R^{4d}$. There are unique $e,f \in \R^d$ such that
$$
(a,b,c,d)= (a,b,e-\nabla_x \p(a, b),f-\nabla_{\eta} \p(a, b))=z_{f,b,a,e}.$$ Then $\left|V_\Psi\sigma(a,b,c,d)\right| = \left|G(f,b,a,e)\right|.$ If $|(a,b,c,d)|$ goes to infinity we have two possibilities:
\begin{itemize}
\item $a$ or $b$ goes to infinity. Then $|(f,b,a,e)|$ goes to infinity.
\item Neither $a$ nor $b$ goes to infinity. We can assume that there is $A>0$ such that $|(a, b)|\leq A$, from where it follows that $\nabla_{x} \p(a, b)$ and $\nabla_{\eta} \p(a, b)$ are bounded. As $|(a,b,e-\nabla_x \p(a, b),f-\nabla_{\eta} \p(a, b))|$ goes to infinity and $a, b, \nabla_{x} \p(a, b), \nabla_{\eta} \p(a, b)$ are bounded, we conclude that either $e$ or $f$ goes to infinity. Hence $|(f,b,a,e)|$ goes to infinity.
\end{itemize}
Since $|\langle T\pi(z)g,\pi(w)g \rangle| \in C_0(\R^{4d})$ we can use \eqref{igualdad00} to conclude that $\s \in M^0(\R^{2d})$.
\end{proof}

\subsection{FIOs on $M^{p,q}_m$}

FIOs we are considering may fail to be bounded on mixed modulation spaces as was shown in \cite{Cordero_2010_Time}. The example was a FIO with phase $\p(x,\eta)=(x\eta, \frac{|x|^2}{2}),$ whose canonical transformation is $\chi(y,\eta)=(y, y+\eta).$ It is easy to check that $I_\chi\left(\ell^{2,1}\right)$ is not contained in $\ell^{2,1}.$

To overcome this obstacle, an extra condition on the phase was introduced in \cite{Cordero_2010_Time}, namely
 \begin{equation}\label{phase-3_mix}
\sup_{x',x, \eta} \left|\nabla_x \p(x,\eta)- \nabla_x \p(x',\eta) \right| < \infty.
\end{equation}
If $\chi = (\chi_1, \chi_2)$ is the corresponding canonical transformation, from condition (\ref{phase-3_mix}), $$\chi_2(y,\eta) =  \nabla_x \p(\chi_1(y,\eta),\eta) = \nabla_x \p(0,\eta)+ a(y, \eta),$$
 $a(y, \eta) $ being a bounded function. From now on, ${\mathcal G}(g, \Lambda)$ is a Parseval frame with $g\in {\mathcal S}({\mathbb R}^d),$ $\Lambda_1 = \alpha {\mathbb Z}^d,$ $\Lambda_2 = \beta {\mathbb Z}^d$ and $\Lambda = \Lambda_1 \times \Lambda_2.$ If $Q$ denotes a symmetric relatively compact fundamental domain of the lattice $\Lambda$ then, there are $K \subseteq \L_2$,  finite, and a unique decomposition
$$
\begin{array}{ll}
\displaystyle \chi_1(\l_1,\l_2) &\displaystyle= r_1{(\l_1,\l_2)} + \psi_1{(\l_1,\l_2)},\\
  \\
\displaystyle\chi_2(\l_1,\l_2) &\displaystyle= r_2{(\l_1,\l_2)} + \psi_2{(\l_2)} + a{(\l_1,\l_2)},
\end{array}
$$ for all $(\l_1,\l_2) \in \L ,$ where $(r_1{(\l_1,\l_2)},r_2{(\l_1,\l_2)}) \in Q$, $\psi_1{(\l_1,\l_2)}\in \L_1$, $\psi_2(\l_2)\in \L_2$ and $a{(\l_1,\l_2)} \in K .$ Moreover, from conditions (\ref{phase-1}) and (\ref{phase-2}) it follows that the map $$ \R^d\to \R^d, \, \eta \mapsto  \nabla_x \p(0,\eta),$$ is a  bilipschitz global diffeomorphism, which implies that $$\sup_{\lambda_2\in \Lambda_2}\mbox{card}\ \psi_2^{-1}(\{\lambda_2\}) < \infty.$$ This motivates the following definition.

\begin{definition}\label{admissible}{\rm Let $\psi:\L_1 \times \L_2 \to \L_1 \times \L_2$, $\psi(i,j)= (\psi_1(i,j) ,\psi_2(i,j)),$ be as in Definition \ref{matrix_permutation}. We say that $\psi$ is admissible if there exist a map $\tilde{\psi}_2:\L_2\to \L_2$ as in Definition \ref{matrix_permutation} and a finite set $K\subset \L_2$ such that $$\psi_2(i,j)=\tilde{\psi}_2(j)+a(i,j), \mbox{ for all } (i,j)$$ where $a(i,j)\in K.$
}
\end{definition}

The discrete versi\'on $\chi':\Lambda \to \Lambda$ of the canonical transformation associated to a phase function satisfying conditions (\ref{phase-1}),(\ref{phase-2}), (\ref{phase-3_mix}) is admissible. From now on, given $\psi$ admissible, to simplify the notation, we will write $\psi_2(j)$ instead of $\tilde{\psi}_2(j).$

Given an admissible $\psi: \L_1 \times \L_2 \to \L_1 \times \L_2$,  let   $C$  be the cardinal of the finite set $K$ and
$M>0$ be such that  for each $(i,j) \in \L_1 \times \L_2,$ $\psi^{-1}(\{(i,j)\})$ has at most $M$ elements and  $\psi_2^{-1}(\{j\})$ has at most $M$ elements for every $j\in \L_2.$

\begin{lemma}\label{lem:M1}{\rm Let $\psi$ be admissible, and  $M_1= C \cdot M.$ For each $j \in \L_2$,  we  define  $\psi_{1,j}:\L_1 \to \L_1$,  as $\psi_{1,j}(i):= \psi_1(i,j).$ Then,  for each $i \in \L_1$ the set $\psi_{1,j}^{-1}(\{i\})$ has at most $M_1$ elements.}
\end{lemma}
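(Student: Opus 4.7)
The plan is to run a direct counting argument using the admissibility of $\psi$. Fix $j\in \Lambda_2$ and $i\in \Lambda_1$, and consider any $k\in \psi_{1,j}^{-1}(\{i\})$, that is, any $k\in \Lambda_1$ with $\psi_1(k,j)=i$. Because $\psi$ is admissible, the second component satisfies $\psi_2(k,j)=\tilde{\psi}_2(j)+a(k,j)$ with $a(k,j)\in K$, so automatically $\psi(k,j)\in \{i\}\times\bigl(\tilde{\psi}_2(j)+K\bigr)$.

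Next I would observe that the target set $\{i\}\times\bigl(\tilde{\psi}_2(j)+K\bigr)$ has exactly $|K|=C$ elements, and by the definition of $M$ the preimage $\psi^{-1}(\{(i,j')\})$ has at most $M$ elements for each individual point $(i,j')$ in that set. Summing over the $C$ points, the full preimage $\psi^{-1}\!\bigl(\{i\}\times(\tilde{\psi}_2(j)+K)\bigr)$ contains at most $C\cdot M=M_1$ pairs.

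Finally I would note that the map $k\mapsto (k,j)$ embeds $\psi_{1,j}^{-1}(\{i\})$ injectively into that preimage, giving the bound $\#\psi_{1,j}^{-1}(\{i\})\leq M_1$. There is no real obstacle here; the only point that needs to be stated cleanly is that the single parameter $M$ controls both $\psi^{-1}$ and $\tilde{\psi}_2^{-1}$, so no auxiliary constant has to be introduced beyond $C$ and $M$.
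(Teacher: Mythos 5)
Your argument is correct and is essentially identical to the paper's proof: both fix $i$ and $j$, note that every $k$ with $\psi_1(k,j)=i$ sends $(k,j)$ into the $C$-element set $\{i\}\times\bigl(\tilde{\psi}_2(j)+K\bigr)$, and then bound the preimage of each of those $C$ points by $M$. The closing remark about $M$ also controlling $\tilde{\psi}_2^{-1}$ is harmless but not needed here, since only the bound on $\psi^{-1}$ of singletons enters the count.
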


\begin{proof}
We fix $j\in \L_2$ and $i_0\in \L_1.$ If $\psi_1(i,j) = \psi_1(i_0,j)$ then $\psi(i,j) = \left(\psi_1(i_0,j), \psi_2(j) + a(i,j)\right)$ can take $C$ different values. Hence, there are only $C\cdot M$ possibilities for $i.$
\end{proof}

We start by analyzing the action of the  basic operators $D_{a,\psi}$ on weighted sequence spaces with mixed norm $\ell^{p,q}_m.$ Since $D_{a,\psi}=I_\psi \circ D_a,$ we will study the continuity of $I_\psi$ on these spaces. To this aim, we consider the transposed map $J_{\psi}:= I_{\psi}^t$, with $\psi$ admissible. We observe that for every $\lambda \in \L,$
$$J_{\psi}(x)= (x_{\psi(\lambda)})_{\lambda} .$$

\begin{proposition}\label{prop:cont-j_mix}
{\rm Let $\psi$ be admissible, $m=(m_{i,j})_{(i,j)\in \L}$ a positive sequence and  $p,q\in [1,\infty]\cup \{0\}.$  Then, $J_{\psi}$ is continuous from $\ell^{p,q}_{{m}}(\L)$ to $\ell^{p,q}_{m \circ \psi}(\L).$}
\end{proposition}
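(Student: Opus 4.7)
The goal is to control
$$
\|J_\psi(x)\|_{\ell^{p,q}_{m\circ\psi}}^q \;=\; \sum_{j\in\Lambda_2}\Bigl(\sum_{i\in\Lambda_1}|x_{\psi(i,j)}|^{p}\,m(\psi(i,j))^{p}\Bigr)^{q/p}
$$
(with the usual modifications for the values $p,q\in\{\infty,0\}$) by a constant multiple of $\|x\|_{\ell^{p,q}_{m}}^q$. The proof is a direct computation exploiting admissibility together with the finite-to-one properties of the coordinates of $\psi$.

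The first step is to exploit admissibility to decouple the two coordinates. By Definition \ref{admissible} write $\psi(i,j)=(\psi_{1}(i,j),\,\psi_{2}(j)+a(i,j))$ with $a(i,j)\in K$ and $|K|=C<\infty$. For each fixed $j\in\Lambda_2$, partition the inner sum over $i$ according to the value of $a(i,j)\in K$. On the slice $\{i:a(i,j)=k\}$ the second coordinate $\psi_2(j)+k$ is constant, and the map $i\mapsto \psi_{1}(i,j)=\psi_{1,j}(i)$ is at most $M_{1}$-to-one by Lemma~\ref{lem:M1}. Consequently
$$
\sum_{i:a(i,j)=k}|x_{\psi_1(i,j),\psi_2(j)+k}|^{p}\,m(\psi_1(i,j),\psi_2(j)+k)^{p}\;\leq\; M_{1}\sum_{i'\in\Lambda_1}|x_{i',\psi_2(j)+k}|^{p}\,m(i',\psi_2(j)+k)^{p},
$$
and summing over $k\in K$ gives the inner-sum estimate
$$
\sum_{i\in\Lambda_1}|x_{\psi(i,j)}|^{p}\,m(\psi(i,j))^{p}\;\leq\; M_{1}\sum_{k\in K} c_{\psi_2(j)+k},\qquad c_{j'}:=\sum_{i'\in\Lambda_1}|x_{i',j'}|^{p}\,m(i',j')^{p}.
$$

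The second step is to raise to the power $q/p$ and sum over $j$. The elementary inequality
$$
\Bigl(\sum_{k\in K}a_{k}\Bigr)^{q/p}\leq C_{0}\sum_{k\in K}a_{k}^{q/p}
$$
holds with $C_{0}=1$ if $q/p\leq 1$ (subadditivity of $t\mapsto t^{q/p}$) and $C_{0}=C^{q/p-1}$ if $q/p>1$ (Jensen). This yields
$$
\|J_\psi(x)\|_{\ell^{p,q}_{m\circ\psi}}^{q}\;\leq\; M_{1}^{q/p}\,C_{0}\sum_{k\in K}\sum_{j\in\Lambda_2}c_{\psi_2(j)+k}^{\,q/p}.
$$
Since the map $j\mapsto\psi_{2}(j)+k$ is at most $M$-to-one on $\Lambda_{2}$, the inner sum over $j$ is bounded by $M\sum_{j'\in\Lambda_2} c_{j'}^{q/p}=M\|x\|_{\ell^{p,q}_{m}}^{q}$. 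Combining, $\|J_\psi(x)\|_{\ell^{p,q}_{m\circ\psi}}\leq (M_{1}\,C_{0}\,C\,M)^{1/q}\|x\|_{\ell^{p,q}_{m}}$, which is the required continuity.

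For the extreme cases $p\in\{\infty,0\}$ or $q\in\{\infty,0\}$ the same partition-and-bound scheme works with the obvious replacement of $\ell^p$- and $\ell^q$-sums by suprema; the inequality on an inner-slice becomes $\sup_{i:a(i,j)=k}|x_{\psi_1(i,j),\psi_2(j)+k}|m\leq \sup_{i'}|x_{i',\psi_2(j)+k}|m$, and similarly for the outer $j$-supremum one uses that $\psi_2$ is finite-to-one. For the ``zero'' subspaces one checks in addition that $J_\psi$ maps the finitely supported sequences into themselves (because $\psi^{-1}$ of a finite set is finite, by Definition \ref{matrix_permutation}) and then invokes density. The only point requiring care is the bookkeeping of the two different multiplicities $M$ and $M_1$ and the splitting constant $C_0$ in the two regimes $q/p\lessgtr 1$; this is the main (though essentially routine) obstacle, the conceptual input being entirely packaged in Lemma~\ref{lem:M1} and the admissible decomposition.
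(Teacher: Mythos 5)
Your argument is correct and follows essentially the same route as the paper: decompose via admissibility into finitely many slices indexed by $K$, apply Lemma \ref{lem:M1} for the first coordinate and the finite-to-one property of $\psi_2$ for the second, and treat the $0$-cases by noting $J_\psi$ preserves finitely supported sequences. The only (cosmetic) difference is that the paper handles the finite sum over $K$ with the triangle inequality in $\ell^p$ and $\ell^q$, whereas you raise to the power $q/p$ and split cases $q/p\lessgtr 1$; both work, and your slight miswriting of the final constant ($M_1^{1/q}$ in place of $M_1^{1/p}$) is immaterial.
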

\begin{proof}
Let $x \in \ell^{p,q}_{{m}}(\L)$ and put $y = x\cdot m$ and $\gamma=(i,j).$  Then
$$
|y_{\psi(i,j)}| \leq \sum_{k\in\Lambda_2}\left|y_{\psi_1(i,j),\psi_2(j)+k}\right|,
$$ hence
$$
\left(\sum_{i\in\Lambda_1} |y_{\psi(i,j)}|^p\right)^{\frac{1}{p}} \leq \sum_{k\in\Lambda_2}\left(\sum_{i\in\Lambda_1} \left|y_{\psi_1(i,j),\psi_2(j)+k}\right|^p\right)^{\frac{1}{p}} \leq \sum_{k\in\Lambda_2}\left(M_1\sum_{\ell\in\Lambda_1}\left|y_{\ell, \psi_2(j)+k}\right|^p\right)^{\frac{1}{p}}.
$$ Consequently
$$
\begin{array}{*2{>{\displaystyle}ll}}
\left(\sum_{j\in\Lambda_2}\left(\sum_{i\in\Lambda_1} |y_{\psi(i,j)}|^p\right)^{\frac{q}{p}}\right)^{\frac{1}{q}} & \begin{displaystyle}\leq \sum_{k}^{}\left(\sum_{j\in\Lambda_2}^{}\left(M_1\sum_{\ell\in\Lambda_1} \left|y_{\ell, \psi_2(j)+k}\right|^p\right)^{\frac{q}{p}}\right)^{\frac{1}{q}} \end{displaystyle}                                                                                                                                                                                                                                         \\ & \\ & \begin{displaystyle}\leq C\left(M\sum_{h\in\Lambda_2}\left(M_1\sum_{\ell\in\Lambda_1} |y_{\ell,h}|^p\right)^{\frac{q}{p}}\right)^{\frac{1}{q}}\end{displaystyle}\\ & \\ & = C M^{\frac{1}{q}}M_1^{\frac{1}{p}}\|y\|_{p,q},
\end{array}
$$ where we have applied triangular inequality for the norms in $\ell^p$ and $\ell^q$, and the facts that, for each $j\in \L_2$, $\psi_2(j)$ can be repeated at most $M$ times and  $\psi_1(i,j)= \psi_{1,j}(i)$ can be repeated at most $M_1$ times (Lemma \ref{lem:M1}).

As $J_\psi$ maps finite supported sequences into finite supported sequences, the cases  $p=0$ or $q=0$ follow immediately.
\end{proof}
\par\medskip
For $a\in\ell^\infty(\L)$ we obtain, from the decomposition $D_{a,\psi} = J^t_\psi\circ D_a,$ the estimate
$$
\|D_{a,\psi}:\ell^{p,q}_{m \circ \psi}(\L)\to \ell^{p,q}_m(\L)\| \leq  C M^{\frac{1}{q}}M_1^{\frac{1}{p}}\cdot \|a\|_ \infty.
$$

\begin{proposition}\label{A_cont_permuted_mix}{\rm
Let $A = \left(a_{\gamma,\gamma'}\right)_{\gamma,\gamma'\in \Lambda}\in {\mathcal C}_{v,\psi}(\Lambda)$,  with $\psi$ admissible and $1\leq  p,q \leq \infty $ be given. Then,

\begin{itemize}
\item[(1)] $A$ defines a bounded operator$A:\ell^{p,q}_{m \circ \psi}(\Lambda) \to \ell^{p,q}_m(\Lambda),$ which is also weak$^\ast$ continuous.

\item[(2)] $A = \begin{displaystyle}\sum_{\gamma\in \Lambda}( T_\gamma \circ D_{a^\gamma, \psi})\end{displaystyle}$ where $a^\gamma:=(a_{\psi(\lambda)+ \gamma,\lambda})_{\lambda\in \Lambda}.$ The convergence of the series is absolute.
\end{itemize}}
\end{proposition}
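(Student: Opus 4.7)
The plan is to mirror the strategy of Proposition \ref{A_cont_permuted}, replacing the single-index estimate for $D_{a,\psi}$ by its mixed-norm counterpart obtained just after Proposition \ref{prop:cont-j_mix}, and using that the translation $T_\gamma$ is bounded on $\ell^{p,q}_m(\Lambda)$ with norm at most $C_m v(\gamma)$, as recorded in the Preliminaries. The two assertions (1) and (2) will be obtained essentially at the same time, first establishing the absolutely convergent decomposition, then deducing boundedness from it, and finally handling weak$^\ast$-continuity via a transposed matrix argument.

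For (2), for each $\gamma\in\Lambda$ the decomposition $D_{a^\gamma,\psi}=J_\psi^t\circ D_{a^\gamma}$ together with Proposition \ref{prop:cont-j_mix} yields
$$
\|D_{a^\gamma,\psi}:\ell^{p,q}_{m\circ\psi}(\Lambda)\to\ell^{p,q}_m(\Lambda)\|\leq C\,M^{1/q}M_1^{1/p}\,\sup_{\lambda\in\Lambda}\left|a_{\psi(\lambda)+\gamma,\lambda}\right|.
$$
Combining this with $\|T_\gamma\|\leq C_m v(\gamma)$, summing over $\gamma$, and invoking the definition of $\|A\|_{\mathcal{C}_{v,\psi}}$ gives absolute convergence of $S:=\sum_\gamma T_\gamma\circ D_{a^\gamma,\psi}$ in operator norm from $\ell^{p,q}_{m\circ\psi}(\Lambda)$ into $\ell^{p,q}_m(\Lambda)$. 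A direct calculation identical to the one concluding the proof of Proposition \ref{A_cont_permuted} shows that $S(e_\lambda)=A(e_\lambda)$ for every $\lambda\in\Lambda$, hence $S=A$ on $\mathbb{C}^{(\Lambda)}$, so by boundedness $A=S$ on the whole space. This proves (2) and yields the norm bound in (1).

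For the weak$^\ast$-continuity part of (1), I would pass to the transposed matrix $B=(b_{\gamma,\gamma'})$ with $b_{\gamma,\gamma'}=a_{\gamma',\gamma}$. Although $\mathcal{C}_{v,\psi}$ is not closed under transposition, one still has
$$
\sum_{\gamma\in\Lambda}v(\gamma)\cdot\sup_{\lambda\in\Lambda}\left|b_{\lambda,\psi(\lambda)+\gamma}\right|<\infty,
$$
and the $v$-moderateness of $1/m$ allows one to repeat the bookkeeping of Proposition \ref{A_cont_permuted} term by term, inserting Proposition \ref{prop:cont-j_mix} and the estimate $\|T_\gamma\|\leq C_m v(\gamma)$ in place of the single-index versions. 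This should yield that $B$ is bounded from $\ell^{p',q'}_{1/m}(\Lambda)$ into $\ell^{p',q'}_{1/m\circ\psi}(\Lambda)$ for all $p',q'\in[1,\infty]\cup\{0\}$, with the usual conventions $p'=0$ when $p=1$ and $q'=0$ when $q=1$. Since the duality pairing then identifies $A$ with $B^t$, weak$^\ast$-continuity of $A$ follows.

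The main obstacle is not the boundedness, which reduces to bookkeeping, but the weak$^\ast$-continuity when $p=\infty$ or $q=\infty$. There one must repeat the predual step from Proposition \ref{A_cont_permuted}: verify that $B$ sends $\mathbb{C}^{(\Lambda)}$ into $\ell^1_{1/m\circ\psi}$, and then use density of the finite sequences in the corresponding $\ell^{p',0}$ or $\ell^{0,q'}$ space to extend $B$ continuously to the predual. Admissibility of $\psi$, through the splitting $\psi_2(i,j)=\tilde\psi_2(j)+a(i,j)$ with $a(i,j)$ in a finite set, is precisely what keeps the inner and outer summations in the mixed norm sufficiently decoupled for the triangle-inequality argument used in Proposition \ref{prop:cont-j_mix} to run both on $B$ and on its predual restriction.
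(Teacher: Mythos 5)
Your proposal is correct and is built from exactly the same ingredients as the paper's proof --- Proposition \ref{prop:cont-j_mix} (hence admissibility of $\psi$), the bound $\norm{T_\gamma}\leq C_m v(\gamma)$, the estimate $\norm{D_{a^\gamma,\psi}}\leq CM^{1/q}M_1^{1/p}\sup_\lambda|a_{\psi(\lambda)+\gamma,\lambda}|$, and the transposed matrix $B$ --- but assembles them in the opposite order. The paper proves (1) first by the duality argument: it pairs $Bx$ against $y\in\ell^{p',q'}_{m\circ\psi}(\Lambda)$, inserts $\norm{J_\psi(x)}_{\ell^{p,q}_{\frac{1}{m}\circ\psi}}$, and obtains boundedness and weak$^\ast$ continuity of $A=B^t$ in one stroke, after which (2) follows ``as in Proposition \ref{A_cont_permuted}.'' You instead prove (2) first, getting boundedness of $S=\sum_\gamma T_\gamma\circ D_{a^\gamma,\psi}$ from the operator-norm convergent series --- which is arguably the more direct route to the norm bound --- and use the transposition only for weak$^\ast$ continuity. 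The one step that needs care is the phrase ``so by boundedness $A=S$ on the whole space'': this is immediate only for $p,q<\infty$, since ${\mathbb C}^{(\Lambda)}$ is not norm-dense in $\ell^{\infty,q}_{m\circ\psi}$ or $\ell^{p,\infty}_{m\circ\psi}$, and for the endpoint cases one must first realize $A$ as $B^t$ (so that it is even defined on the whole space) and then identify $S$ with $B^t$ using that both are weak$^\ast$ continuous and agree on the weak$^\ast$-dense set of finite sequences; the weak$^\ast$ continuity of $S$ itself comes from decomposing its transpose as $\sum_\gamma D^t_{a^\gamma,\psi}\circ T_{-\gamma}$, exactly as in the paper's proof of Proposition \ref{A_cont_permuted}. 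Since your outline already contains the transposition argument, this is a matter of ordering and of making the identification explicit rather than a missing idea.
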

\begin{proof}
It is easier to deal with the transposed map, so we first consider $b_{\gamma,\gamma'} = a_{\gamma',\gamma}$ and {\it claim} that $B = \left(b_{\gamma,\gamma'}\right)_{\gamma,\gamma'\in \Lambda}$ defines a bounded operator $B:\ell^{p,q}_{\frac{1}{m}}(\Lambda) \to \ell^{p,q}_{\frac{1}{m}\circ \psi}(\Lambda),$  for all $ p,q \in [1,\infty ] \cup \{0\}.$ We will assume that $p,q\in [1,\infty].$ Then the case that $p = 0$ or $q = 0$ can be obtained as in the proof of Proposition \ref{A_cont_permuted}.

 As $\ell^{p,q}_{\frac{1}{m}}(\Lambda) \subset \ell^\infty_{\frac{1}{m}}(\Lambda), $ by  Proposition \ref{A_cont_permuted},  we obtain that $$ B:\ell^{p,q}_{\frac{1}{m}}(\Lambda) \to {\mathbb C}^\Lambda $$ is a well-defined operator. To prove that $Bx\in \ell^{p,q}_{\frac{1}{m}\circ \psi}(\Lambda)$ it is enough to check that
$$
\sum_{\g\in\Lambda}\left|\left(B x\right)_{\g} y_{\g}\right| < \infty
$$ for every $y\in \ell^{p',q'}_{m \circ \psi}(\L).$ We proceed as in Proposition \ref{A_cont_permuted}. We denote $\phi(\l) = v(\l)\sup_{\g}\left|b_{\g,\psi(\g)+\l}\right|.$ We obtain, using that translations are isometries on the spaces $\ell^{p,q}$,
$$
\begin{array}{*2{>{\displaystyle}ll}}
 \sum_{\g\in\Lambda}\left|\left(B x\right)_{\g} y_{\g}\right| & \begin{displaystyle}\leq C_m\sum_{\l\in\Lambda} \phi(\l)\sum_{\g\in\Lambda} \frac{x_{\psi(\g)+\l}}{m_{\psi(\g)+\l}}|y_\g| m_{\psi(\g)}\end{displaystyle}\\ & \\ & \begin{displaystyle}\leq C_m\sum_{\l\in\Lambda} \phi(\l)\cdot \|J_\psi(x)\|_{\ell_{\frac{1}{m}\circ \psi }^{p,q}}\cdot \|y\|_{\ell_{m\circ \psi}^{p',q'}}.\end{displaystyle}
\end{array}
$$

(2) follows as in Proposition \ref{A_cont_permuted} once continuities and the estimates for the norms of the operators $D_{a^\gamma, \psi}$ are obtained.
\end{proof}

The characterization of compactness obtained in  \ref{compac_A_permutation} extends to mixed spaces when $\psi$ is admissible.

\begin{proposition}\label{compac_A_permutation_mix}{\rm
Let $A = \left(a_{\g,\g'}\right)_{\g,\g'\in \Lambda}\in {\mathcal C}_{v,\psi}(\Lambda)$, $\psi$ admissible and $1\leq p,q \leq \infty$ be given. Then, $A$ defines a compact operator
$$
A:\ell^{p,q}_{m\circ \psi}(\Lambda) \to \ell^{p,q}_{m}(\Lambda)$$
if and only if $a^\gamma:= \left(a_{\psi(\lambda)+\gamma,\lambda}\right)_{\lambda\in \Lambda}\in c_0(\Lambda)\ \ \forall \g\in \Lambda.$
}
\end{proposition}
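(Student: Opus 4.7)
The plan is to follow the template of Theorem \ref{compac_A_permutation} almost verbatim, substituting the mixed-norm tools developed in this subsection (Propositions \ref{prop:cont-j_mix} and \ref{A_cont_permuted_mix}) for the pure $\ell^p$ ones. The argument splits into the usual two implications.

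For the sufficiency ($a^\gamma \in c_0(\Lambda)$ for every $\gamma \Rightarrow A$ compact), I would invoke the decomposition
$$A = \sum_{\gamma\in\Lambda} T_\gamma \circ D_{a^\gamma,\psi}$$
furnished by Proposition \ref{A_cont_permuted_mix}(2), which converges absolutely in the operator norm. Since a norm-limit of compact operators is compact, it is enough to check that each $D_{a^\gamma,\psi}$ is compact as an operator $\ell^{p,q}_{m\circ\psi}\to\ell^{p,q}_m$. Using the factorization $D_{a^\gamma,\psi}=J_\psi^t\circ D_{a^\gamma}$ recorded just after Proposition \ref{prop:cont-j_mix}, together with the fact (noted in the sequence-space preliminaries) that a diagonal operator $D_b$ with $b\in c_0$ is the $\Vert\cdot\Vert_\infty$-limit of its finite sections and hence compact on every $\ell^{p,q}_m$, the claim reduces to the boundedness of $J_\psi^t$, which is exactly Proposition \ref{prop:cont-j_mix} (applied to the transpose).

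For the converse, I would argue by duality via Proposition \ref{convpredual}. Proposition \ref{A_cont_permuted_mix}(1) supplies the weak$^\ast$ continuity of $A$, so $A$ is the transpose of a bounded operator between the preduals of $\ell^{p,q}_m$ and $\ell^{p,q}_{m\circ\psi}$ (here one uses the convention $1' = 0$ of the preliminaries to identify the preduals in the extreme cases, so that these preduals exist for every $1\leq p,q\leq\infty$). Consider now the normalized unit vectors $x_\lambda:=e_\lambda/m_{\psi(\lambda)}$. They satisfy $\Vert x_\lambda\Vert_{\ell^{p,q}_{m\circ\psi}}=1$ and tend to $0$ in the weak$^\ast$ topology, since for any $y$ in the predual the pairing $\langle x_\lambda,y\rangle=y_\lambda/m_{\psi(\lambda)}$ vanishes in the limit. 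Proposition \ref{convpredual} therefore yields $\Vert A(x_\lambda)\Vert_{\ell^{p,q}_m}\to 0$. Fixing $\gamma\in\Lambda$, restricting $A(x_\lambda)$ to the single coordinate $\psi(\lambda)+\gamma$ (any coordinate seminorm is dominated by the full $\ell^{p,q}_m$ norm) and invoking the $v$-moderacy estimate $m_{\psi(\lambda)+\gamma}\geq m_{\psi(\lambda)}/(C_m v(\gamma))$ produce
$$|a_{\psi(\lambda)+\gamma,\lambda}|\ \leq\ C_m\, v(\gamma)\,\Vert A(x_\lambda)\Vert_{\ell^{p,q}_m}\ \longrightarrow\ 0,$$
whence $a^\gamma\in c_0(\Lambda)$.

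The main delicacy, and essentially the only place where the mixed-norm setting differs from Theorem \ref{compac_A_permutation}, lies in the necessity direction: one has to verify that $(x_\lambda)$ is weak$^\ast$ null in $\ell^{p,q}_{m\circ\psi}$ uniformly over the full range $1\leq p,q\leq\infty$, which requires the correct predual identification recorded in the sequence-space preliminaries, and one has to certify the hypothesis $T^t(R)\subseteq G$ of Proposition \ref{convpredual}, i.e.\ that the transpose of $A$ sends the chosen predual of $\ell^{p,q}_m$ into that of $\ell^{p,q}_{m\circ\psi}$; this is already covered by the weak$^\ast$ continuity part of Proposition \ref{A_cont_permuted_mix}(1). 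Once this bookkeeping is in place the proof collapses, exactly as in Theorem \ref{compac_A_permutation}, to the one-line estimate displayed above.
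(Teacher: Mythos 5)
Your proof is correct and is exactly the argument the paper intends: the paper states this proposition without proof as the mixed-norm analogue of Theorem \ref{compac_A_permutation}, and your two implications (absolute convergence of $\sum_\gamma T_\gamma\circ D_{a^\gamma,\psi}$ with each $D_{a^\gamma,\psi}=I_\psi\circ D_{a^\gamma}$ compact for sufficiency; Proposition \ref{convpredual} applied to the weak$^\ast$ null sequence $e_\lambda/m_{\psi(\lambda)}$ plus the coordinatewise estimate and $v$-moderateness for necessity) reproduce that template faithfully, with the predual bookkeeping for the extreme exponents handled as the paper's conventions require.
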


The next result extends \cite[Theorem 5.2]{Cordero_2010_Time} to weighted modulation spaces and also includes the cases $p = \infty$ or $q = \infty.$

\begin{theorem}\label{Teo_FIO_cont_mix}{\rm Let $T$ be a FIO whose phase $\Phi$ is tame and satisfies condition (\ref{phase-3_mix}), and $\sigma\in M^\infty_{1\otimes v_{s_0}}(\R^{2d})$ with $0\leq s < s_0-2d.$ Then, $T:M^{p,q}_{m \circ \chi}(\R^d) \to M^{p,q}_m(\R^d)$ is a continuous operator for every $1 \leq {p,q} \leq  \infty$ and for every $v_s$-moderate weight $m.$}
\end{theorem}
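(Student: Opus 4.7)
The plan is to reduce the boundedness of $T$ to the boundedness of its Gabor matrix $M(\sigma,\Phi)$ on the corresponding sequence spaces, exactly as was done for the unmixed case in Theorem \ref{Teo_FIO_comp}, and then invoke the mixed-norm results developed in this subsection.

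First I would apply Proposition \ref{prop:Gabor_matrix_in_class}: since $\sigma\in M^\infty_{1\otimes v_{s_0}}(\R^{2d})$ with $s_0>2d$ and $0\leq s<s_0-2d$, we have $M(\sigma,\Phi)\in \CC_{v_s,\chi'}$, where $\chi'$ is the discrete version of the canonical transformation. The second step is to observe that, under the extra hypothesis (\ref{phase-3_mix}) on the phase, the discrete canonical transformation $\chi'$ is admissible in the sense of Definition \ref{admissible}; this is precisely the decomposition $\chi_2(\lambda_1,\lambda_2)=\nabla_x\Phi(0,\lambda_2)+a(\lambda_1,\lambda_2)$ together with the fact that $\eta\mapsto \nabla_x\Phi(0,\eta)$ is a bilipschitz diffeomorphism, as already spelled out before Definition \ref{admissible}.

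Once admissibility of $\chi'$ is at hand, Proposition \ref{A_cont_permuted_mix} applies to $A=M(\sigma,\Phi)$ with $\psi=\chi'$ and yields that, for every $v_s$-moderate weight $m$ and every $1\leq p,q\leq\infty$,
\[
M(\sigma,\Phi):\ell^{p,q}_{m\circ\chi'}(\Lambda)\longrightarrow \ell^{p,q}_{m}(\Lambda)
\]
is bounded and weak$^\ast$ continuous. Next I would record that $\ell^{p,q}_{m\circ\chi}(\Lambda)=\ell^{p,q}_{m\circ\chi'}(\Lambda)$ with equivalent norms: writing $\chi(\lambda)=\chi'(\lambda)+r_\lambda$ with $r_\lambda$ in the bounded fundamental domain $Q$, the $v_s$-moderateness of $m$ gives $m(\chi(\lambda))\asymp m(\chi'(\lambda))$ uniformly in $\lambda$, exactly as in the remark preceding Theorem \ref{Teo_FIO_comp}.

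The final step is to lift the matrix boundedness back to the operator via Theorem \ref{th:operator_versus_matrix}. For $1\leq p,q<\infty$ this is immediate from part (1) of that theorem. For the endpoint cases $p=\infty$ or $q=\infty$ I would use part (2): the weak$^\ast$ continuity of $M(\sigma,\Phi)$ on $\ell^{p,q}_{m\circ\chi'}(\Lambda)$ gives a unique weak$^\ast$ continuous (and hence norm-bounded) extension of $T$ to $M^{p,q}_{m\circ\chi}(\R^d)\to M^{p,q}_m(\R^d)$, as was done in the analogous $p=\infty$ statement for $M^p_m$ earlier in this section, exploiting that $\mathcal{S}(\R^d)$ is weak$^\ast$-dense in $M^{p,q}_{m\circ\chi}(\R^d)$. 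The main technical point, and the step I would double-check most carefully, is the verification of admissibility of $\chi'$ together with the equivalence $\ell^{p,q}_{m\circ\chi}=\ell^{p,q}_{m\circ\chi'}$, because the mixed-norm geometry is more sensitive to the splitting of the lattice $\Lambda=\Lambda_1\times\Lambda_2$ than the diagonal case treated in the previous subsection; everything else is a direct application of the results already established.
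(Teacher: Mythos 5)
Your proposal is correct and follows exactly the route the paper intends (the paper states this theorem without writing out a proof, precisely because it is the direct combination of Proposition \ref{prop:Gabor_matrix_in_class}, the admissibility of $\chi'$ established just before Definition \ref{admissible}, Proposition \ref{A_cont_permuted_mix}, the equivalence $\ell^{p,q}_{m\circ\chi}=\ell^{p,q}_{m\circ\chi'}$, and Theorem \ref{th:operator_versus_matrix}). Your handling of the endpoint cases $p=\infty$ or $q=\infty$ via the weak$^\ast$ continuity in part (2) of Theorem \ref{th:operator_versus_matrix} is also the right mechanism and matches the treatment given earlier for $M^\infty_m$.
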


\begin{theorem}\label{Teo_FIO_comp_mix}{\rm Let $T$ be a FIO whose phase $\Phi$ is tame and satisfies condition (\ref{phase-3_mix}), and $\sigma\in M^\infty_{1\otimes v_{s_0}}(\R^{2d})$ with $0\leq s < s_0-2d.$ The following conditions are equivalent:
 \begin{itemize}
\item[(1)] $T:L^2(\R^d) \to L^2(\R^d)$ is a compact operator.
\item[(2)] $T:M^{p,q}_{m \circ \chi}(\R^d) \to M^{p,q}_m(\R^d)$ is a compact operator for some $1 \leq {p,q} \leq \infty$ and for some $v_s$-moderate weight $m.$
\item[(3)] $T:M^{p,q}_{m \circ \chi}(\R^d) \to M^{p,q}_m(\R^d)$ is a compact operator for every $1 \leq {p,q} \leq \infty$ and for every $v_s$-moderate weight $m.$
\end{itemize}
}
\end{theorem}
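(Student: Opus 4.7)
The strategy parallels the proof of Theorem \ref{Teo_FIO_comp}: reduce the compactness of $T$ on each mixed modulation space to a single $c_0$-condition on the Gabor matrix which is manifestly independent of the parameters $p,q,m$, and then match this with the $L^2$ compactness through Theorem \ref{Teo_FIO_comp_L2}. Concretely, the goal is to show that each of (1), (2), (3) is equivalent to
\begin{equation*}
a^\mu := \left(\langle T\pi(\lambda)g, \pi(\chi'(\lambda)+\mu)g\rangle\right)_{\lambda\in\Lambda} \in c_0(\Lambda) \quad \forall\, \mu\in\Lambda,
\end{equation*}
where $\chi'$ is the discrete version of the canonical transformation. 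Note that under the tame hypothesis plus the extra condition (\ref{phase-3_mix}), the discussion preceding Definition \ref{admissible} shows that $\chi'$ is \emph{admissible} in the sense of Definition \ref{admissible}, so the mixed-norm machinery of Propositions \ref{A_cont_permuted_mix} and \ref{compac_A_permutation_mix} is at our disposal.

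First I would set up boundedness and weak$^*$ continuity at both the operator and matrix levels. Proposition \ref{prop:Gabor_matrix_in_class} gives $M(\sigma,\Phi)\in \mathcal{C}_{v_s,\chi'}(\Lambda)$ whenever $0\leq s < s_0-2d$. Proposition \ref{A_cont_permuted_mix} then shows that $M(\sigma,\Phi):\ell^{p,q}_{m\circ\chi'}(\Lambda)\to \ell^{p,q}_m(\Lambda)$ is bounded and weak$^*$ continuous for all $1\leq p,q\leq\infty$ and every $v_s$-moderate weight $m$. Because $\chi-\chi'$ takes values in the relatively compact fundamental domain $Q$ and $m$ is $v_s$-moderate, the sequence spaces $\ell^{p,q}_{m\circ\chi}(\Lambda)$ and $\ell^{p,q}_{m\circ\chi'}(\Lambda)$ coincide with equivalent norms. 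Combining this with Theorem \ref{th:operator_versus_matrix}(1)-(2) (equivalently, recalling Theorem \ref{Teo_FIO_cont_mix}), we deduce that $T:M^{p,q}_{m\circ\chi}(\R^d)\to M^{p,q}_m(\R^d)$ is bounded and weak$^*$ continuous in all the cases considered.

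Once this framework is in place, the compactness equivalence is immediate. By Theorem \ref{th:operator_versus_matrix}(3), $T$ is compact between the two modulation spaces if and only if $M(\sigma,\Phi)$ is compact between the corresponding sequence spaces. By Proposition \ref{compac_A_permutation_mix} (applied with $\psi=\chi'$ admissible and $v=v_s$), this in turn is equivalent to $a^\mu\in c_0(\Lambda)$ for every $\mu\in\Lambda$. The crucial point is that this last condition does not mention $p$, $q$, or $m$. Therefore (2) $\Rightarrow$ (the $c_0$ condition) $\Rightarrow$ (3), and trivially (3) $\Rightarrow$ (2). Finally, Theorem \ref{Teo_FIO_comp_L2} tells us precisely that (1) is also equivalent to this same $c_0$ condition, closing the loop.

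The main technical point, and the one deserving care, is to justify that $m\circ\chi$ and $m\circ\chi'$ yield the same mixed-norm sequence space; everything else is a clean invocation of the general machinery. This identification reduces to the pointwise estimate $C_m^{-1}v_s(r_\lambda)^{-1}\leq (m\circ\chi)(\lambda)/(m\circ\chi')(\lambda)\leq C_m v_s(r_\lambda)$ and the fact that $r_\lambda\in Q$ is bounded, so the ratio is bounded above and below uniformly in $\lambda$; this is independent of how the $\ell^{p,q}$ norm is taken. Apart from this bookkeeping step, the entire proof is a concatenation of previously established results.
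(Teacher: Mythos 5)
Your proposal is correct and follows exactly the route the paper intends: the paper in fact omits the proof of this theorem, leaving it as the evident concatenation of Proposition \ref{prop:Gabor_matrix_in_class}, the admissibility of $\chi'$ under condition (\ref{phase-3_mix}), Propositions \ref{A_cont_permuted_mix} and \ref{compac_A_permutation_mix}, Theorem \ref{th:operator_versus_matrix}, and Theorem \ref{Teo_FIO_comp_L2}, mirroring the proof of Theorem \ref{Teo_FIO_comp}. Your explicit verification that $\ell^{p,q}_{m\circ\chi}=\ell^{p,q}_{m\circ\chi'}$ with equivalent norms is the right bookkeeping step and is consistent with the remark the paper makes before Theorem \ref{Teo_FIO_comp}.
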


\subsection{PSDOs on $M^{p,q}_m$}

Finally, we are going to consider compactness of pseudodifferential operators in Kohn-Nirenberg form. They are a particular case of FIOs when $\Phi(x,y) = x \cdot y,$ and hence $\chi(y,\eta) = (y,\eta).$ If $\Lambda$ is a regular lattice with symmetric relatively compact fundamental domain $Q, $ the map $\chi'$ is the identity, therefore it is  admissible.  The class of matrices $C_{v,\chi'}$ is denoted by  ${\mathcal C}_v = {\mathcal C}_v(\Lambda)$ and consists of all matrices $A = \left(a_{\gamma,\gamma'}\right)_{\gamma,\gamma'\in \Lambda}$ such that
$$
\|A\|_{{\mathcal C}_v} = \sum_{\gamma\in \Lambda}v(\gamma)\cdot\sup_{\lambda\in \Lambda}\left|a_{\lambda,\gamma+\lambda}\right| < \infty.$$ According to \cite[Lemma 3.5]{Grochenig_2006_Time}, ${\mathcal C}_v$ is an algebra. Since the weight $v$ is symmetric, it follows that
$$\sum_{\gamma\in \Lambda}v(\gamma)\cdot\sup_{\lambda\in \Lambda}\left|a_{\lambda,\gamma+\lambda}\right| = \sum_{\gamma\in \Lambda}v(\gamma)\cdot\sup_{\lambda\in \Lambda}\left|a_{\gamma+\lambda,\lambda}\right|.$$ This means that $A\in {\mathcal C}_v$ if and only if $A^{t}\in {\mathcal C}_v.$
Each $A\in {\mathcal C}_v$ defines a bounded operator $$
A:\ell^{p,q}_m(\Lambda) \to \ell^{p,q}_m(\Lambda),$$ for $p,q\in [1,\infty]\cup \{0\}$ and each $v$-moderate sequence $m.$ The compactness of the map is independent on $p,\, q$ and on $m.$  This allows us to improve results obtained in \cite{Fernandez_2007_Some} and \cite{Fernandez_2010_Annihilating}.

For convenience, we state the results for Weyl pseudodifferential operators. We recall that every operator from ${\mathcal S}({\mathbb R}^d)$ into ${\mathcal S}'({\mathbb R}^d)$ can be represented as a pseudodifferential operator $L_\sigma$ with Weyl symbol $\sigma$ and as a pseudodifferential operator in Kohn-Nirenberg form with symbol $\tau.$ We refer to \cite[Chapter 14]{Grochenig_2001_Foundations} where the relation between $\sigma$ and $\tau$ is established. In particular, for $s\geq 0,$  $\sigma \in M^{\infty, 1}_{1\otimes v_s}(\R^{2d})$ if and only if $\tau \in M^{\infty, 1}_{1\otimes v_s}(\R^{2d}).$

\begin{theorem}\label{Comp_no_m}{\rm
Let $\sigma \in M^{\infty, 1}_{1\otimes v_s}(\R^{2d})$  be given. Then the following statements are equivalent:
\begin{itemize}
\item[(1)] $L_{\sigma}:L^2(\R^d) \to L^2(\R^d)$ is compact.
\item[(2)]  $L_{\sigma}:M^{p,q}_m(\R^d) \to M^{p,q}_m(\R^d)$ is compact for all $p,\, q \in [1,\infty]$ and every $v_s$-moderate weight $m.$
\item[(3)] $L_{\sigma}:M^{p,q}_m(\R^d) \to M^{p,q}_m(\R^d)$ is compact for some $p,\, q \in [1,\infty]$ and some $v_s$-moderate weight $m.$
%\item $L_{\sigma}:M^1_m(\R^d) \to M^1_m(\R^d)$ es compacto para todo $m$ $v$-moderado.
\item[(4)] $\sigma \in M^{0}(\R^{2d})$.
\end{itemize}
}
\end{theorem}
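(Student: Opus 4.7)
The plan is to reduce the theorem to the abstract matrix machinery developed in the previous sections by exploiting that $\Phi(x,\eta)=x\cdot\eta$ is a tame quadratic phase satisfying condition (\ref{phase-3_mix}) (since $\nabla_x\Phi(x,\eta)=\eta$ does not depend on $x$), whose canonical transformation is $\chi=\mathrm{Id}_{\R^{2d}}$; the discrete version $\chi'$ can then be taken to be $\mathrm{Id}_\Lambda$, which is trivially admissible, and $m\circ\chi=m$ for every weight $m$. After passing via the Kohn--Nirenberg symbol $\tau$ corresponding to $L_\sigma$ (for which $\sigma\in M^{\infty,1}_{1\otimes v_s}$ iff $\tau\in M^{\infty,1}_{1\otimes v_s}$, and $\sigma\in M^0$ iff $\tau\in M^0$), the Gabor matrix
$$a_{\mu,\lambda}:=\langle L_\sigma\pi(\lambda)g,\pi(\mu)g\rangle$$
lies in the class $\mathcal{C}_{v_s}(\Lambda)$ of the present subsection.

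For $(1)\Leftrightarrow(2)\Leftrightarrow(3)$, $(2)\Rightarrow(3)$ is trivial; for the remaining chain, Theorem \ref{th:operator_versus_matrix} translates compactness of $L_\sigma$ on each $M^{p,q}_m$ (including $L^2=M^{2,2}_1$) into compactness of the matrix $(a_{\mu,\lambda})_{\mu,\lambda\in\Lambda}$ on the matching $\ell^{p,q}_m$, and Proposition \ref{compac_A_permutation_mix}, applicable because $\chi'$ is admissible, says the latter holds iff
$$\bigl(a_{\lambda+\gamma,\lambda}\bigr)_{\lambda\in\Lambda}\in c_0(\Lambda)\quad\text{for every }\gamma\in\Lambda.$$
Since this condition does not involve $p$, $q$, or $m$, the three statements collapse into one.

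For $(1)\Rightarrow(4)$, the equivalence just established upgrades (1) to compactness of $L_\sigma$ on every $M^p_{m\circ\chi}\to M^p_m$ with $1\le p\le\infty$ and $v_s$-moderate $m$; Theorem \ref{quadratic} applied to the quadratic phase $\Phi(x,\eta)=x\cdot\eta$ and the symbol $\tau$ then yields $\tau\in M^0(\R^{2d})$, hence $\sigma\in M^0(\R^{2d})$. Conversely, for $(4)\Rightarrow(2)$, the argument of Theorem \ref{comp_si_s0} can be repeated in the mixed-norm setting: $\tau\in M^0$, together with $|a_{\mu,\lambda}|=|V_\Psi\tau(z_{\lambda,\mu})|$ for a \emph{single} window $\Psi$ (a quadratic phase has constant second derivatives) and the escape-to-infinity analysis of $z_{\lambda,\mu}$, force $a_{\mu,\lambda}\to 0$ as $|(\mu,\lambda)|\to\infty$; in particular $(a_{\lambda+\gamma,\lambda})_\lambda\in c_0$ for every $\gamma$, and Proposition \ref{compac_A_permutation_mix} combined with Theorem \ref{th:operator_versus_matrix} then delivers (2). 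The main obstacle is purely in the bookkeeping: Theorems \ref{comp_si_s0} and \ref{quadratic} are stated under $\sigma\in M^\infty_{1\otimes v_{s_0}}$ with $s_0>2d$, whereas the present hypothesis is $\sigma\in M^{\infty,1}_{1\otimes v_s}$ with $s\ge 0$; this is harmless because the Kohn--Nirenberg calculus already gives $M(\tau,\Phi)\in\mathcal{C}_{v_s}$ directly, and that matrix membership is the sole ingredient those proofs actually invoke, so their arguments carry over unchanged.
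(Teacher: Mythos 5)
Your proof of $(1)\Leftrightarrow(2)\Leftrightarrow(3)$ coincides with the paper's: Gr\"ochenig's almost diagonalization \cite[Theorem 3.2]{Grochenig_2006_Time} places the Gabor matrix in $\mathcal{C}_{v_s}(\Lambda)$, Theorem \ref{th:operator_versus_matrix} transfers compactness to the sequence-space level, and Proposition \ref{compac_A_permutation_mix} (with $\psi=\mathrm{id}$, trivially admissible) reduces everything to the $(p,q,m)$-independent condition that each generalized diagonal lies in $c_0(\Lambda)$. Where you genuinely diverge is the equivalence with $(4)$: the paper simply quotes \cite[Theorem 4.6]{Fernandez_2007_Some} for ``$L_\sigma$ compact on $L^2$ iff $\sigma\in M^0$'', whereas you derive both implications internally by specializing Theorems \ref{comp_si_s0} and \ref{quadratic} to the phase $\Phi(x,\eta)=x\cdot\eta$ after passing to the Kohn--Nirenberg symbol $\tau$. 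Your route is close in spirit to the alternative argument the paper only sketches after its proof (via $\left|\langle L_\sigma\pi(\lambda)g,\pi(\lambda+\mu)g\rangle\right|=\left|V_{W(g,g)}\sigma(\lambda+\tfrac{\mu}{2},j(\mu))\right|$), and it makes the result self-contained; the price is the bookkeeping you correctly flag, since $M^{\infty,1}_{1\otimes v_s}$ is not contained in $M^\infty_{1\otimes v_{s_0}}$ for $s_0>2d$, so Theorems \ref{comp_si_s0} and \ref{quadratic} cannot be cited and must be re-proved with Proposition \ref{prop:Gabor_matrix_in_class} replaced by Gr\"ochenig's theorem. Your diagnosis that the matrix membership is the only point where the pointwise-decay hypothesis enters those proofs is accurate: the remaining ingredients (the identity $\left|\langle T\pi(\lambda)g,\pi(\mu)g\rangle\right|=\left|V_\Psi\sigma(z_{\lambda,\mu})\right|$ with a single window for a quadratic phase, the $\ell^1$ frame-expansion interpolation to continuous $(z,w)$, and the escape-to-infinity analysis of $z_{\lambda,\mu}$) do not use it. One step you should make explicit is the claim $\sigma\in M^0(\R^{2d})\Leftrightarrow\tau\in M^0(\R^{2d})$, which the paper does not state: it holds because the Weyl-to-Kohn--Nirenberg correspondence is a topological isomorphism of $M^\infty(\R^{2d})$ mapping $\mathcal{S}(\R^{2d})$ onto itself, hence preserving the closure of $\mathcal{S}$; alternatively this translation can be avoided altogether by running the whole argument on the Weyl side, as the paper's sketched alternative does.
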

\begin{proof}
 Let ${\mathcal G}(g, \Lambda)$ be a Gabor frame with $g\in {\mathcal S}({\mathbb R}^d)$ and $\Lambda = \alpha {\mathbb Z}^d\times \beta {\mathbb Z}^d$ for $\alpha, \, \beta >0.$ Then, according to \cite[Theorem 3.2]{Grochenig_2006_Time},
$$
M(\sigma):=\left(\langle L_{\sigma}\pi(\lambda)g,\pi(\mu)g \rangle\right)_{(\mu, \lambda) \in \Lambda \times \Lambda}\in {\mathcal C}_{v_s}(\Lambda).$$ Moreover, it follows from (\ref{op_matrix-1}) and (\ref{op_matrix-2}) that $L_{\sigma}:M^{p,q}_m(\R^d) \to M^{p,q}_m(\R^d)$ is compact if and only if $M(\sigma):\ell^{p,q}_m(\Lambda)\to \ell^{p,q}_m(\Lambda)$ is. Now, the equivalences among (1), (2) and (3) follow from Theorem \ref{compac_A_permutation_mix}. Finally, since $M^{\infty, 1}_{1\otimes v_s}(\R^{2d})\subset M^{\infty, 1}(\R^{2d})$ we can apply \cite[Theorem 4.6]{Fernandez_2007_Some} to obtain that condition (1) is equivalent to condition (4).
\end{proof}
\par\medskip
Alternatively we could argue as follows. According to Theorem \ref{compac_A_permutation_mix}, $M(\sigma):\ell^{p,q}_m(\Lambda)\to \ell^{p,q}_m(\Lambda)$ is a compact operator if and only if \begin{equation}\label{diagonals}\left(\langle L_{\sigma}\pi(\lambda)g,\pi(\lambda + \mu)g \rangle\right)_{\lambda \in \Lambda} \in c_0(\Lambda)\end{equation} for every $\mu \in \Lambda.$ By \cite[3.1]{Grochenig_2006_Time},
$$
\left|\langle L_{\sigma}\pi(\lambda)g,\pi(\lambda + \mu) g \rangle\right| = \left|V_{\Phi}\sigma(\lambda +\frac{ \mu}{2},j(\mu))\right|
$$ where $\Phi = W(g,g),$ and $j:{\mathbb R}^{2d} \to {\mathbb R}^{2d} $ is the map $j(\xi, \omega)= (\omega,-\xi).$ This permits to prove that condition (\ref{diagonals}) is equivalent to the fact that $\sigma \in M^{0}(\R^{2d})$.
\par\medskip\noindent
We want to finish with some comments regarding localization operators (see for instance \cite{Fernandez_Galbis_2006,Fernandez_2010_Annihilating} and the references therein). The compact localization operators on $L^2(\R^d)$ were characterized in \cite{Fernandez_Galbis_2006} in terms of the behavior of the STFT of their symbols. The condition there obtained also gives compactness for the localization operators when acting on weighted modulation spaces of Hilbert type $M^2_m(\R^d)$ (\cite[5.6]{Fernandez_2010_Annihilating}). Since every localization operator can be described as a PSDO in Weyl form, Theorem \ref{Comp_no_m} permits to conclude that the compactness of the localization operator on a modulation class $M^p_m(\R^d)$ does not depend on $p$ nor $m.$ This conclusion could no be achieved with the techniques used in \cite{Fernandez_2010_Annihilating}.

\par\medskip\noindent
{\bf Acknowledgement.} The present research was partially supported by the projects MTM2016-76647-P, ACOMP/2015/186 (Spain). The third author wish to thank the Generalitat Valenciana (Project VALi+d Pre Orden 64/2014) for its support.

\noindent
{\sc Author's address}: Departament d'An\`alisi Matem\`atica, Universitat de Val\`encia, Dr. Moliner 50, 46100-Burjassot, Val\`encia (Spain).
\par\medskip\noindent
fernand@uv.es, antonio.galbis@uv.es, eva.primo@uv.es

\end{document}